\newcommand{\Z}{\mathbb Z}
\newcommand{\fqn}{\mathbb{F}_{q^n}}
\newcommand{\F}{\mathbb{F}}
\newcommand{\q}{\mathcal P}
\newcommand{\m}{\mu_q}
\newcommand{\R}{\rho_q}
\begin{document}

\markboth{Lucas Reis}
{Mean value theorems for a class of density-like arithmetic functions}

%
\catchline{}{}{}{}{}
%

\title{Mean value theorems for a class of density-like arithmetic functions}

\author{LUCAS REIS}
\address{Departamento de Matem\'{a}tica,
Universidade Federal de Minas Gerais,
UFMG,
Belo Horizonte MG (Brazil),
 30123-970\\
\email{lucasreismat@mat.ufmg.br}} 



\maketitle

\begin{history}
\received{(Day Month Year)}
\accepted{(Day Month Year)}
\end{history}

\begin{abstract}
This paper provides a mean value theorem for arithmetic functions $f$ defined by $$f(n)=\prod_{d|n}g(d),$$ where $g$ is an arithmetic function taking values in $(0, 1]$ and satisfying some generic conditions. As an application of our main result, we prove that the density $\m(n)$ (resp. $\R(n)$) of normal (resp. primitive) elements in the finite field extension $\F_{q^n}$ of $\F_q$ are arithmetic functions of (non zero) mean values.
\end{abstract}

\keywords{mean value theorem; arithmetic functions; normal elements; primitive elements; finite fields.}

\ccode{Mathematics Subject Classification 2010: 11H60, 11N37, 11T30}

\section{Introduction}
Given an arithmetic function $f:\mathbb N\to \mathbb R$, one of the most pertinent questions is about the behavior of $f$, {\em on average}. This is measured by the sums $\sum_{n\le x}f(n)$. In particular, if the limit
$$\lim\limits_{x\to +\infty}\frac{1}{x}\sum_{n\le x}f(n),$$
equals $c\in \mathbb R$, we say that $f(n)$ has {\em mean value} $c$. Arithmetic functions may have mean value even if they behave ``irregularly''. For instance, let $\varphi(n)$ be the Euler Totient function and set $F(n)=\frac{\varphi(n)}{n}$. One can show that $\liminf\limits_{n\to +\infty}F(n)=0$ and $\limsup\limits_{n\to+\infty}F(n)=1$ but $F$ has mean value $\frac{6}{\pi^2}$. In fact, there is a more general result on the mean value of {\em multiplicative} functions, i.e., arithmetic functions $f$ such that $f(mn)=f(m)\cdot f(n)$ whenever $\gcd(m, n)=1$. More specifically, if $f$ is multiplicative taking values in $[-1, 1]$, then $f$ has always a mean value, which is equal to $0$ if the series $\sum_{p}\frac{|1-f(p)|}{p}$ diverges~\cite{W67} (this sum is over the prime numbers) and equal to 
$$M_f(\infty)=\prod_{p}\left(1+\frac{f(p)}{p}+\frac{f(p^2)}{p^2}+\ldots \right)\left(1-\frac{1}{p}\right)<+\infty,$$
if the previous series converges~\cite{W44}.

In this paper, we are interested in the average order of arithmetic functions given by convolution products \begin{equation}\label{eq:density-like}f(n)=\prod_{d|n}g(d),\end{equation} where $g$ takes values in $(0, 1]$. We do not assume any further multiplicative property on $g$ or $f$. Our main result, Theorem~\ref{thm:main}, entails that if $g$ satisfies some special conditions, then $f$ possesses a mean value $A_f$ and such mean value can be computed as the limit of a (not uniquely determined) sequence. Moreover, if $g$ is bounded below by a positive (absolute) constant, we prove that $A_f$ is positive. Our main motivation to study this kind of arithmetic functions comes from the density of special elements in finite field extensions. In fact, the density $\R(n)$ of the primitive elements and the density $\m(n)$ of the so called normal elements in the finite field extension $\F_{q^n}$ of $\F_q$ are arithmetic functions given by an identity like Eq.~\eqref{eq:density-like}. As an application of our main result, we prove that $\m(n)$ and $\R(n)$ have positive mean values $\overline{\mu}_q$ and $\overline{\rho}_q$, respectively. The result regarding the existence of $\overline{\rho}_q>0$ is known~\cite{sh}, but the author employed a completely different approach. We also explore the values of $\overline{\mu}_q$ and $\overline{\rho}_q$ as $q$ grows. In particular, good estiamtes on $\overline{\mu}_q$ yield statistical results on the behavior of $\m(n)$, improving previous results; see Theorem~\ref{thm:normal-big}, Corollary~\ref{cor:numerical} and the comments thereafter.

The paper is structured as follows. Section 2 provides background material. In Section 3 we state and prove our main result. Finally, in Section 4, we discuss the applicability of our main result to obtain mean value theorems for the arithmetic functions $\m(n)$ and $\R(n)$.

\section{Preliminaries}
In this short section, we introduce some notation and provide basic background material that is used along the way. For positive integers $a, b$ such that $\gcd(a, b)=1$, let $e_a(b)$ be the order of $a$ modulo $b$, i.e., the least positive integer $k$ such that $a^k\equiv 1\pmod b$. Also, $\varphi(n)$ denotes the Euler Totient function at $n$. As usual, for real valued functions $F$ and $G$, we write $F(x)=O(G(x))$ if $|F(x)|\le C\cdot |G(x)|$ for some absolute constant $C>0$ and write $F(x)=o(G(x))$ if $\lim\limits_{x\to +\infty}\frac{F(x)}{G(x)}=0$.

\subsection{Estimates}
The following lemma provides some inequalities that are frequently used throughout this paper. Its proof is straightforward so we omit details.

\begin{lemma}\label{lem:small}
For any real numbers $x_1, x_2, \ldots, x_n\in [0, 1]$ and  $e_1, e_2, \ldots, e_n\in [1,+\infty)$, we have that
\begin{equation}\label{eq:1-x}\prod_{i=1}^n(1-x_i)^{e_i}\ge 1-\sum_{i=1}^ne_ix_i.\end{equation}
If $0<c<1$ and $x\in [0, c)$, then 
\begin{equation}\label{eq:log2}
|\log(1-x)|\le \alpha_c x,
\end{equation}
for some $\alpha_c>0$ not depending on $x$. Moreover, for $x\in \left(0,\frac{1}{2}\right]$, we have that 
\begin{equation}\label{eq:log}-x^2-x< \log(1-x)\le 0.\end{equation}
\end{lemma}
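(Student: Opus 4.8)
The plan is to handle the three estimates independently, each via a short induction or an elementary calculus argument. For~(\ref{eq:1-x}), I would start from the classical Weierstrass product inequality $\prod_{i=1}^{n}(1-y_i)\ge 1-\sum_{i=1}^{n}y_i$ for $y_i\in[0,1]$, which follows by induction on $n$: the case $n=1$ is an equality, and the inductive step multiplies the bound for $n-1$ factors by $1-y_n\ge 0$ and discards the nonnegative term $y_n\sum_{i<n}y_i$. To incorporate the exponents I would use Bernoulli's inequality $(1-x_i)^{e_i}\ge 1-e_ix_i$, valid because $1-x_i\in[0,1]$ and $e_i\ge 1$. If some $e_ix_i\ge 1$, the right-hand side of~(\ref{eq:1-x}) is nonpositive while the left-hand side is nonnegative, so the inequality is trivial; otherwise each $e_ix_i$ lies in $[0,1]$, and one multiplies the termwise bounds $(1-x_i)^{e_i}\ge 1-e_ix_i\ge 0$ and then applies the Weierstrass inequality with $y_i=e_ix_i$.

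For~(\ref{eq:log2}), I would note that on $(0,1)$ one has $\frac{-\log(1-x)}{x}=\sum_{k\ge 1}\frac{x^{k-1}}{k}$, which is nondecreasing in $x$; hence on $[0,c)$, with the value $1$ assigned at $x=0$, this quantity is bounded above by its limit as $x\to c^-$, so one may take $\alpha_c=\frac{-\log(1-c)}{c}$, which indeed does not depend on $x$. For~(\ref{eq:log}), the inequality $\log(1-x)\le 0$ is immediate from $1-x\le 1$; for the left inequality I would set $h(x)=\log(1-x)+x+x^2$, so that $h(0)=0$ and $h'(x)=\frac{x(1-2x)}{1-x}$, which is positive on $\left(0,\frac12\right)$ and zero at $x=\frac12$. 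Thus $h$ is increasing on $\left(0,\frac12\right]$ and $h(x)>0$ there, which is exactly the asserted bound $-x^2-x<\log(1-x)$.

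None of these steps is a genuine obstacle; the one point that requires a little care is the case distinction in~(\ref{eq:1-x}), since $e_ix_i$ may exceed $1$, so the Weierstrass step must be applied only after first reducing to that range.
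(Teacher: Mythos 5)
Your proof is correct and complete: the Bernoulli-plus-Weierstrass argument for~(\ref{eq:1-x}) (including the necessary case split when some $e_ix_i\ge 1$), the monotonicity of $\tfrac{-\log(1-x)}{x}$ giving $\alpha_c=\tfrac{-\log(1-c)}{c}$, and the derivative computation for $h(x)=\log(1-x)+x+x^2$ are all the standard elementary steps. The paper omits its proof as ``straightforward,'' and your write-up is exactly the kind of routine verification intended.
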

From the main result in~\cite{NR83}, we have the following lemma.

\begin{lemma}\label{lem:estimate-divisor} If $\sigma_0(m)$ is the number of positive divisors of $m$, then for every $m\ge 3$,
$$\sigma_0(m)<m^{\frac{1.1}{\log\log m}}.$$
\end{lemma}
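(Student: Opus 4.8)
The plan is to read everything off from the explicit divisor estimate of Nicolas and Robin in~\cite{NR83}. Their main result provides, for every integer $m\ge 3$, a bound of the shape
$$\sigma_0(m)\le m^{\frac{c_0}{\log\log m}},$$
where $c_0$ is the \emph{explicit} constant $\sup_{m\ge 3}\frac{\log\sigma_0(m)\,\log\log m}{\log m}$. The classical maximal-order result gives $\limsup_{m}\frac{\log\sigma_0(m)\,\log\log m}{\log m}=\log 2=0.693\ldots$, while the true supremum over all $m\ge 3$ is only slightly larger and is computed in~\cite{NR83} to be $c_0=1.066\ldots$, attained at a specific highly composite number. So the first step is simply to quote this inequality in the precise normalization used in~\cite{NR83} and to record the numerical fact that $c_0<1.1$.

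The second step is to notice that this gap between the constants upgrades the non-strict bound to the strict one claimed. For $m\ge 3$ one has $m>e$, so $\log m>1>0$ and hence $\log\log m>0$; in particular the quantity $m^{1.1/\log\log m}$ is meaningful and the map $t\mapsto m^{t}$ is strictly increasing. Applying this with $t=c_0/\log\log m<1.1/\log\log m$ gives
$$\sigma_0(m)\le m^{\frac{c_0}{\log\log m}}<m^{\frac{1.1}{\log\log m}},$$
which is exactly the assertion of the lemma.

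If one prefers not to invoke the sharp constant $c_0$, an equally valid route is to use a cruder explicit form such as $\log\sigma_0(m)\le \frac{\log 2\,\log m}{\log\log m}\bigl(1+\varepsilon(m)\bigr)$ with $\varepsilon(m)\to 0$ effectively: fix a threshold $m_0$ beyond which $\log 2\cdot\bigl(1+\varepsilon(m)\bigr)<1.1$, and then settle the finitely many cases $3\le m<m_0$ by direct inspection, noting that for such $m$ the target $m^{1.1/\log\log m}$ is already enormous (for instance $1.1/\log\log 3>11$, so the inequality reads $\sigma_0(3)=2<3^{11}$). Either way there is no genuine obstacle; the only point deserving attention is making sure that the exponent one reads off from~\cite{NR83} --- which differs by a factor of $\log 2$ according to whether it is written with a natural or a base-two logarithm --- is indeed the version whose constant, about $1.066$, lies below $1.1$.
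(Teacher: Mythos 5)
Your proof is correct and follows the same route as the paper, which simply cites the main result of~\cite{NR83} (whose explicit constant is $1.5379\log 2\approx 1.066<1.1$) without further argument. Your additional care in checking that $\log\log m>0$ for $m\ge 3$ and that the gap between $1.066$ and $1.1$ yields the strict inequality is a welcome, if minor, elaboration of what the paper leaves implicit.
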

%

\section{Main result}
Before we state and prove our main result, let us introduce a useful definition.
\begin{definition}
Fix $N$ a positive integer. An arithmetic function $g$ is $N$-density like if  $0< g(n)\le 1$ for any positive integer $n$ with equality $g(n)=1$ whenever $\gcd(n, N)>1$.
\end{definition}
Our main result can be stated as follows.
\begin{theorem}\label{thm:main}
Fix $N$ a positive integer, let $g$ be an $N$-density like arithmetic function such that the series
$$\sum_{d=1}^{\infty}\frac{1-g(d)}{d},$$
converges and set $f(n)=\prod_{d|n}g(d)$. Let $\{L_t\}_{t\ge 1}$ be a sequence of positive integers satisfying the following properties:
\begin{enumerate}[(i)]
\item $L_t$ divides $L_{t+1}$ for $t\ge 1$;
\item $\lim\limits_{t\to+ \infty}L_t=+\infty$;
\item $L_t$ is divisible by every small integer $n\ge 1$ that is relatively prime with $N$. More specifically, there exists a function $h:\mathbb R_{>0}\to \mathbb R$ with $\lim\limits_{x\to +\infty}h(x)=+\infty$ and $h(t)<n$ for every integer $n\ge 1$ not dividing $L_t$ such that $\gcd(n, N)=1$.
\end{enumerate}
Then the sequence $A_t=\frac{(f\ast \varphi)(L_t)}{L_t}=\frac{1}{L_t}\sum_{r|L_t}f(r)\varphi\left(\frac{L_t}{r}\right)$ converges to a limit $A_f\in [0, 1]$ and this is the mean value of $f$, i.e., 
$$A_f=\lim\limits_{x\to +\infty}\frac{1}{x}\sum_{n\le x}f(n).$$
Moreover, if there exists $c>0$ such that $g(d)>c$ for any $d\ge 1$, then the series
$$A_f^*:=\sum_{d=1}^{\infty}\frac{\log g(d)}{d}=\sum_{d=1\atop{\gcd(d, N)=1}}^{\infty}\frac{\log g(d)}{d},$$
converges and 
$$A_f^*=\lim\limits_{x\to +\infty}\frac{1}{x}\sum_{n\le x}\log f(n).$$
In this case, $$A_f\ge \exp(A_f^*)=\prod_{d=1}^{\infty} g(d)^{1/d}>0,$$ and so $f$ has a non zero mean value.
\end{theorem}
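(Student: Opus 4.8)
The plan is to realize $A_t$ as the \emph{exact} Cesàro mean of a periodic function that approximates $f$, and then transfer convergence from these periodic approximants to $f$ itself. The starting point is the identity
$$A_t = \frac{(f\ast\varphi)(L_t)}{L_t} = \frac{1}{L_t}\sum_{n=1}^{L_t} f(\gcd(n, L_t)),$$
obtained by grouping the integers $1\le n\le L_t$ according to the value $d=\gcd(n,L_t)$ and using that exactly $\varphi(L_t/d)$ of them satisfy $\gcd(n,L_t)=d$. Thus, writing $f_t(n):=f(\gcd(n,L_t))$, the function $f_t$ is periodic of period $L_t$ with mean $A_t$ over a full period, so $\frac{1}{x}\sum_{n\le x}f_t(n)\to A_t$ as $x\to\infty$ for each fixed $t$. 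Since $g(d)=1$ whenever $\gcd(d,N)>1$, one has $f(n)=\prod_{d\mid n,\ \gcd(d,N)=1}g(d)$ and $f_t(n)=\prod_{d\mid n,\ d\mid L_t,\ \gcd(d,N)=1}g(d)$, whence $f_t(n)\ge f(n)$ and
$$0 \le f_t(n) - f(n) \le 1 - \prod_{d\mid n,\ d\nmid L_t,\ \gcd(d,N)=1} g(d) \le \sum_{d\mid n,\ d\nmid L_t,\ \gcd(d,N)=1}\bigl(1-g(d)\bigr),$$
the last step being Eq.~\eqref{eq:1-x} of Lemma~\ref{lem:small} with all exponents equal to $1$.

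By property (iii), every integer coprime to $N$ and at most $h(t)$ divides $L_t$, so each $d$ in the last sum exceeds $h(t)$. Summing over $n\le x$ and interchanging the order of summation (each $d$ occurring for at most $x/d$ values of $n$) yields the crucial estimate, \emph{uniform in} $x$,
$$0 \le \frac{1}{x}\sum_{n\le x}\bigl(f_t(n)-f(n)\bigr) \le \sum_{d>h(t),\ \gcd(d,N)=1}\frac{1-g(d)}{d} =: \varepsilon_t .$$
Because $\sum_d\frac{1-g(d)}{d}$ converges and $h(t)\to\infty$, we get $\varepsilon_t\to 0$. For the convergence of $\{A_t\}$ I would note that $L_s\mid L_t$ for $s<t$ forces $\gcd(n,L_s)\mid\gcd(n,L_t)$, so the product defining $f_t(n)$ contains all factors of $f_s(n)$ and possibly more; as each factor lies in $(0,1]$, this gives $f_s(n)\ge f_t(n)\ge f(n)$. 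Hence $\{f_t(n)\}_t$ is non-increasing for every fixed $n$, so $\{A_t\}$ is non-increasing; being contained in $[0,1]$ it converges to some $A_f\in[0,1]$.

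To identify $A_f$ as the mean value of $f$, fix $t$ and split $\frac{1}{x}\sum_{n\le x}f(n)=\frac{1}{x}\sum_{n\le x}f_t(n)-\frac{1}{x}\sum_{n\le x}(f_t(n)-f(n))$; the first term tends to $A_t$ by periodicity, while the second lies in $[0,\varepsilon_t]$ uniformly in $x$. Therefore
$$A_t-\varepsilon_t \le \liminf_{x\to\infty}\frac{1}{x}\sum_{n\le x}f(n) \le \limsup_{x\to\infty}\frac{1}{x}\sum_{n\le x}f(n) \le A_t .$$
Letting $t\to\infty$ kills both the oscillation gap $\varepsilon_t$ and $|A_t-A_f|$, so the Cesàro mean of $f$ converges to $A_f$. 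This is precisely where the three hypotheses on $\{L_t\}$ enter: divisibility gives the monotonicity, while $L_t\to\infty$ and property (iii) drive $\varepsilon_t\to 0$. I expect the crux of the whole argument to be securing the bound $\varepsilon_t$ \emph{independently of $x$}, since that uniformity is exactly what licenses exchanging the two limits $x\to\infty$ and $t\to\infty$.

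For the final assertion, assume $g(d)>c$ for all $d$. With $1-g(d)\in[0,1-c)$, Eq.~\eqref{eq:log2} applied with constant $1-c$ gives $|\log g(d)|\le\alpha_{1-c}\,(1-g(d))$, so $\sum_d\frac{|\log g(d)|}{d}\le\alpha_{1-c}\sum_d\frac{1-g(d)}{d}<\infty$ and $A_f^*$ converges absolutely, the restriction to $\gcd(d,N)=1$ being automatic since $\log g(d)=0$ otherwise. Using $\log f(n)=\sum_{d\mid n}\log g(d)$ and $\lfloor x/d\rfloor=x/d-\{x/d\}$,
$$\frac{1}{x}\sum_{n\le x}\log f(n) = \frac{1}{x}\sum_{d\le x}\log g(d)\left\lfloor\frac{x}{d}\right\rfloor = \sum_{d\le x}\frac{\log g(d)}{d}-\frac{1}{x}\sum_{d\le x}\log g(d)\left\{\frac{x}{d}\right\};$$
the first sum tends to $A_f^*$, and the remainder is bounded by $\frac{\alpha_{1-c}}{x}\sum_{d\le x}(1-g(d))=o(1)$ by Kronecker's lemma applied to the convergent series $\sum_d\frac{1-g(d)}{d}$. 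Hence the mean of $\log f$ equals $A_f^*$. Finally, concavity of $\log$ (Jensen) gives $\frac{1}{x}\sum_{n\le x}f(n)\ge\exp\bigl(\frac{1}{x}\sum_{n\le x}\log f(n)\bigr)$, and passing to the limit yields $A_f\ge\exp(A_f^*)=\prod_{d}g(d)^{1/d}$, which is strictly positive because $A_f^*$ is finite.
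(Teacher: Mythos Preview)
Your proof is correct and conceptually close to the paper's, but the execution differs in two places worth noting. First, you recognize $A_t$ as the \emph{exact} period average of the periodic approximant $f_t(n)=f(\gcd(n,L_t))$ and bound $f_t-f$ uniformly in $x$ by the tail $\varepsilon_t=\sum_{d>h(t)}(1-g(d))/d$; you then run a clean double-limit argument in $x$ and $t$ separately. The paper instead couples the two limits by choosing $t=t(x)$ with $L_t^2\le x<L_{t+1}^2$ and estimates the analogous error term $S_2(x)$ directly; this avoids isolating a uniform bound but requires that specific choice of $t$. Second, and more strikingly, your monotonicity argument is much shorter: from $L_s\mid L_t$ you deduce $\gcd(n,L_s)\mid\gcd(n,L_t)$, hence $f_t(n)\le f_s(n)$ pointwise, and comparing both averages over the common period $L_t$ gives $A_t\le A_s$ immediately. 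The paper proves the same inequality by a page-long decomposition of $A_t-A_{t+1}$ into sums $S_1,S_2$ and a careful factorization $b=b_rb_r^*$; your route sidesteps all of that. For the $\log f$ part the two proofs essentially coincide, except that you invoke Kronecker's lemma where the paper splits the sum at $\sqrt{x}$ to show $\sum_{d\le x}\log g(d)=o(x)$; both are standard. The final AM--GM/Jensen step is identical.
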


\begin{remark}
We observe that the conditions on the number $N$ in Theorem~\ref{thm:main} are not restrictive at all. In fact, any arithmetic function $g$ taking values in $(0, 1]$ is $N$-density like for $N=1$. Moreover, for $N=1$, we can take $L_t$ as the least common multiples of the first $t$ positive integers and $h(t)=t$. 
\end{remark}

\subsection{A note on higher moments}
Fix $N$ a positive integer and $\alpha\ge 1$. If $g$ is an $N$-density like arithmetic function, the same holds for $g^{\alpha}$.  In addition, from Eq.~\eqref{eq:1-x}, for any $\alpha\ge 1$ and any $x\in (0, 1]$, the following inequality holds
$$0\le 1-x^{\alpha}\le \alpha (1-x).$$
In particular, if the series $\sum_{d=1}^{\infty}\frac{1-g(d)}{d}$ converges, so does $\sum_{d=1}^{\infty}\frac{1-g(d)^{\alpha}}{d}$. From these observations, the following corollary follows immediately from Theorem~\ref{thm:main}.

\begin{corollary}\label{cor:main}
Fix $\alpha>1$ and let $g, f$ and $L_t$ be as in Theorem~\ref{thm:main}. Set $A_t^{(\alpha)}=\frac{1}{L_t}\sum_{r|L_t}f(r)^{\alpha}\varphi\left(\frac{L_t}{r}\right)$. Then the sequence $\{A_t^{(\alpha)}\}_{t\ge 1}$ converges to a limit $A_{f}^{(\alpha)}$ and this is the mean value of $f^{\alpha}$, i.e., 
$$A_f^{(\alpha)}=\lim\limits_{x\to +\infty}\frac{1}{x}\sum_{n\le x}f(n)^{\alpha}.$$
In particular, $f$ has variance
$$\sigma(f):=-(A_f)^2+\lim\limits_{x\to +\infty}\frac{1}{x}
\sum_{n\le x}f(n)^2=A_f^{(2)}-(A_f)^2.$$

\end{corollary}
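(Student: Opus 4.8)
The plan is to reduce everything to Theorem~\ref{thm:main} applied to the arithmetic function $g^{\alpha}$, using the observations collected just before the statement. First I would verify the hypotheses of Theorem~\ref{thm:main} for $g^{\alpha}$ in place of $g$: since $0<g(d)\le 1$ and $\alpha>1$, we have $0<g(d)^{\alpha}\le 1$, and $g(d)^{\alpha}=1$ whenever $g(d)=1$, so $g^{\alpha}$ is again $N$-density like with the same $N$; the sequence $\{L_t\}$ is attached to $N$ and the divisibility/growth conditions (i)--(iii) are unchanged, so the same $\{L_t\}$ works. The convergence of $\sum_d \frac{1-g(d)^{\alpha}}{d}$ is exactly the inequality $0\le 1-x^{\alpha}\le\alpha(1-x)$ (a special case of Eq.~\eqref{eq:1-x}) applied termwise, compared against the convergent series $\sum_d\frac{1-g(d)}{d}$ from the hypotheses of Theorem~\ref{thm:main}.

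Next I would identify $\prod_{d\mid n}g(d)^{\alpha}$ with $f(n)^{\alpha}$: since $f(n)=\prod_{d\mid n}g(d)$, raising to the $\alpha$-th power gives $f(n)^{\alpha}=\prod_{d\mid n}g(d)^{\alpha}$, so the function built from $g^{\alpha}$ via Eq.~\eqref{eq:density-like} is precisely $f^{\alpha}$. Theorem~\ref{thm:main} then applies verbatim with $g\rightsquigarrow g^{\alpha}$ and $f\rightsquigarrow f^{\alpha}$: the sequence
$$A_t^{(\alpha)}=\frac{(f^{\alpha}\ast\varphi)(L_t)}{L_t}=\frac{1}{L_t}\sum_{r\mid L_t}f(r)^{\alpha}\varphi\!\left(\tfrac{L_t}{r}\right)$$
converges to a limit $A_f^{(\alpha)}\in[0,1]$, and this limit equals $\lim_{x\to+\infty}\frac1x\sum_{n\le x}f(n)^{\alpha}$, which is the claimed mean value of $f^{\alpha}$.

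For the variance statement I would simply combine the case $\alpha=1$ (which is Theorem~\ref{thm:main} itself, giving the mean value $A_f$) with the case $\alpha=2$ of the above. By definition $\sigma(f)=\lim_{x\to+\infty}\frac1x\sum_{n\le x}f(n)^2-(A_f)^2$, and the first limit is $A_f^{(2)}$ by what we just proved, so $\sigma(f)=A_f^{(2)}-(A_f)^2$, noting that this is a genuine (nonnegative) quantity because it is a limit of variances of the finite samples $\{f(n):n\le x\}$. There is essentially no obstacle here: the only point requiring a line of care is the passage $1-x^{\alpha}\le\alpha(1-x)$ for $x\in(0,1]$, which is immediate from Bernoulli's inequality or from Eq.~\eqref{eq:1-x} with a single variable and exponent $\alpha$; everything else is a direct specialization of the main theorem already in hand.
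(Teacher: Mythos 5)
Your proposal is correct and follows essentially the same route as the paper: the paper also reduces the corollary to Theorem~\ref{thm:main} applied to $g^{\alpha}$, checking that $g^{\alpha}$ is $N$-density like and that $\sum_d\frac{1-g(d)^{\alpha}}{d}$ converges via the inequality $0\le 1-x^{\alpha}\le\alpha(1-x)$ from Eq.~\eqref{eq:1-x}. The variance identity is then the same immediate combination of the cases $\alpha=1$ and $\alpha=2$.
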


\

\subsection{Proof of Theorem~\ref{thm:main}}
Our proof is divided in three main parts. First, we have the following proposition.

\begin{proposition}\label{prop:crucial}
Let $g, f,  L_t$ and $A_t$ be as in  Theorem~\ref{thm:main}. For $x>0$, let $t=t(x)$ be the unique positive integer such that $L_t^2\le x<L_{t+1}^2$. Then the following holds:
$$\frac{1}{x}\sum_{n\le x}f(n)=A_t+o(1).$$
\end{proposition}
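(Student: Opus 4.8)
The plan is to compare $\sum_{n\le x}f(n)$ with the partial sums of the truncated function $\widetilde{f}(n):=f(\gcd(n,L_t))=\prod_{d\mid\gcd(n,L_t)}g(d)$, which depends only on $n$ modulo $L_t$. Summing $\widetilde{f}$ over one full period recovers $A_t$ exactly: grouping the integers $1\le n\le L_t$ according to the value $r=\gcd(n,L_t)$ gives
$$\sum_{n=1}^{L_t}\widetilde{f}(n)=\sum_{r\mid L_t}f(r)\cdot\#\{\,n\le L_t:\gcd(n,L_t)=r\,\}=\sum_{r\mid L_t}f(r)\varphi(L_t/r)=L_t A_t.$$
Since $0\le\widetilde{f}\le 1$, breaking $\{1,\dots,\lfloor x\rfloor\}$ into $\lfloor x/L_t\rfloor$ complete blocks of $L_t$ consecutive integers plus a final block of length $<L_t$, each complete block contributing $L_tA_t$, yields $\sum_{n\le x}\widetilde{f}(n)=\lfloor x/L_t\rfloor L_t A_t+O(L_t)=xA_t+O(L_t)$, where we used $A_t\le 1$ (which holds since $\sum_{d\mid L_t}\varphi(d)=L_t$ and $f\le 1$). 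As $x\to\infty$ we have $t=t(x)\to\infty$ (the $L_t$ are non-decreasing by (i) and unbounded by (ii)), and $x\ge L_t^2$ forces $L_t/x\le 1/L_t$, so $\frac1x\sum_{n\le x}\widetilde{f}(n)=A_t+O(1/L_t)=A_t+o(1)$.

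It therefore remains to prove $\frac1x\sum_{n\le x}\big(\widetilde{f}(n)-f(n)\big)=o(1)$. Because $g$ takes values in $(0,1]$, each divisor of $n$ that fails to divide $L_t$ only shrinks the product defining $f(n)$; writing $f(n)=\widetilde{f}(n)\prod_{d\mid n,\,d\nmid L_t}g(d)$ we obtain $0\le\widetilde{f}(n)-f(n)\le 1-\prod_{d\mid n,\,d\nmid L_t}g(d)$, and inequality~\eqref{eq:1-x} (with all exponents equal to $1$ and $x_i=1-g(d_i)$) bounds the right-hand side by $\sum_{d\mid n,\,d\nmid L_t}(1-g(d))$. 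Summing over $n\le x$ and interchanging the order of summation,
$$\sum_{n\le x}\big(\widetilde{f}(n)-f(n)\big)\le\sum_{d\le x\atop d\nmid L_t}(1-g(d))\lfloor x/d\rfloor\le x\sum_{d\nmid L_t}\frac{1-g(d)}{d}.$$

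To finish, I would use that $g$ is $N$-density like: $1-g(d)=0$ whenever $\gcd(d,N)>1$, so the last sum runs only over $d$ with $\gcd(d,N)=1$ and $d\nmid L_t$, and property (iii) forces every such $d$ to satisfy $d>h(t)$. Hence
$$\sum_{d\nmid L_t}\frac{1-g(d)}{d}\le\sum_{d>h(t)}\frac{1-g(d)}{d},$$
a tail of the convergent series $\sum_{d\ge1}\frac{1-g(d)}{d}$; since $h(t)\to\infty$ as $t=t(x)\to\infty$, this tail tends to $0$, so $\frac1x\sum_{n\le x}\big(\widetilde{f}(n)-f(n)\big)=o(1)$, and combining with the estimate of the previous paragraph gives $\frac1x\sum_{n\le x}f(n)=A_t+o(1)$. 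I do not anticipate a real obstacle: the substance of the argument is to recognize $\widetilde{f}(\cdot)=f(\gcd(\cdot,L_t))$ as the correct periodic approximant of $f$, so that its period average is exactly $A_t$, and to observe that the truncation error is controlled, uniformly for $x$ in the window $L_t^2\le x<L_{t+1}^2$, by the tail of $\sum_d\frac{1-g(d)}{d}$ thanks to the divisibility property (iii) of $\{L_t\}$.
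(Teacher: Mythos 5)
Your proposal is correct and is essentially the paper's own argument in different packaging: your $\widetilde{f}(n)=f(\gcd(n,L_t))$ decomposition is exactly the paper's splitting of the sum into $S_1(x)=\sum_{r\mid L_t}f(r)\,\#\{n\le x:\gcd(n,L_t)=r\}$ and the error term $S_2(x)=\sum_{n\le x}\bigl(f(\gcd(n,L_t))-f(n)\bigr)$, and both are estimated the same way (period count giving $xA_t+O(L_t)$ with $L_t\le\sqrt{x}$, and the tail of $\sum_d\frac{1-g(d)}{d}$ beyond $h(t)$ via property (iii) and the $N$-density-like hypothesis). No gaps.
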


\begin{proof}
We observe that 
$$\sum_{n\le x}f(n)=\underbrace{\sum_{r|L_t}f(r)\sum_{n\le x\atop{\gcd(n, L_t)=r}}1}_{S_1(x)}-\;\underbrace{\sum_{r|L_t}f(r)\sum_{n\le x\atop{\gcd(n, L_t)=r}}\left(1-\frac{f(n)}{f(r)}\right)}_{S_2(x)}.$$
It is direct to verify that, for each divisor $r$ of $L_t$, the number of positive integers $j\le x$ for which $\gcd(j, L_t)=r$ equals $\frac{\varphi\left(\frac{L_t}{r}\right)}{L_t}x+O\left(\varphi\left(\frac{L_t}{r}\right)\right)$ and so we have that
$$S_1(x)-A_tx=O\left(\sum_{r|L_t}\varphi(r)\right)=O(L_t)=o(x),$$ since $L_t\le \sqrt{x}$ and $f(r)\le 1$ for any $r\ge 1$.
It remains to prove that $S_2(x)=o(x)$. Fix $r$ a divisor of $L_t$ and let $n\le x$ be a positive integer such that $\gcd(n, L_t)=r$. Hence, if $d$ divides $n$ but does not divide $r$, we have that $d$ does not divide $L_t$. Let $h:\mathbb R_{>0}\to \mathbb R$ be as in Theorem~\ref{thm:main}.  In particular, any divisor $d$ of $n$ that does not divide $r$ satisfies $d>h(t)$ whenever $\gcd(d, N)=1$. Since $g$ is $N$-density like, we obtain the following inequalities:
\begin{equation}\label{eq:concentration}
\frac{f(n)}{f(r)}=\prod_{d|n\atop d\nmid|r}g(d) \ge \prod_{d|n\atop d>h(t)}(1-(1-g(d)))\ge 1-\sum_{d|n\atop d>h(t)}(1-g(d)),\end{equation}
where in the last inequality we used Eq.~\eqref{eq:1-x}. Since $f(r)\in [0, 1]$ for any $r\ge 1$, Eq.~\eqref{eq:concentration} entails that
$$0\le S_2(x)\le \sum_{n\le x}\sum_{d|n\atop d>h(t)}(1-g(d))=x(1+O(1))\sum_{h(t)<d\le x}\frac{1-g(d)}{d}.$$
So it suffices to show that $$\sum_{h(t)<d\le x}\frac{1-g(d)}{d}=o(1).$$ 
We observe that the previous sum is bounded by $|A(x)-A(h(t))|$, where
$$A(y):=\sum_{1\le d\le y}\frac{1-g(d)}{d}.$$
Since $\{L_t\}_{t\ge 1}$ is non decreasing, it follows that $t=t(x)\to +\infty$ as $x\to+\infty$ and so the same holds for $h(t)$. By an argument of Cauchy sequences, $A(x)-A(h(t))=o(1)$  since the series
$$\sum_{d=1}^{\infty}\frac{1-g(d)}{d}=\lim\limits_{y\to +\infty}A(y),$$
converges. 
\end{proof}
Since $t=t(x)\to +\infty$ as $x\to +\infty$, in order to conclude that $f(n)$ has (finite) mean value, it suffices to prove that the sequence $\{A_t\}_{t\ge 1}$ converges. It follows by the definition that the numbers $A_t$ are non negative. In particular, the sequence $\{A_t\}_{t\ge 1}$ converges if it is nonincreasing. We prove the latter in the following lemma.

\begin{lemma}
Let $g, f,  L_t$ and $A_t$ be as  Theorem~\ref{thm:main}. Then $\{A_t\}_{t\ge 1}$ is a nonincreasing sequence.
\end{lemma}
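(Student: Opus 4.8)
The plan is to reinterpret each $A_t$ as an \emph{unweighted} average of $f$ over greatest common divisors and then to exploit the fact that $f$ is nonincreasing along the divisibility order. The starting point is an elementary observation: since $g$ takes values in $(0,1]$, whenever $a\mid b$ we may factor $f(b)=\bigl(\prod_{d\mid a}g(d)\bigr)\bigl(\prod_{d\mid b,\ d\nmid a}g(d)\bigr)\le \prod_{d\mid a}g(d)=f(a)$, because the second product ranges over reals in $(0,1]$ and is therefore at most $1$. Thus $a\mid b$ implies $f(b)\le f(a)$.

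Next I would record the identity $A_t=\frac{1}{L_t}\sum_{n=1}^{L_t}f(\gcd(n,L_t))$. Indeed, for each divisor $r$ of $L_t$ the number of integers $n$ with $1\le n\le L_t$ and $\gcd(n,L_t)=r$ equals $\varphi(L_t/r)$; grouping $\sum_{n=1}^{L_t}f(\gcd(n,L_t))$ according to the value of $\gcd(n,L_t)$ reproduces $\sum_{r\mid L_t}f(r)\varphi(L_t/r)$, and dividing by $L_t$ gives $A_t$ (one also uses $\sum_{r\mid L_t}\varphi(L_t/r)=L_t$).

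Now comes the comparison of $A_{t+1}$ with $A_t$, which I would carry out over the common range $1\le n\le L_{t+1}$. By property (i) we have $L_t\mid L_{t+1}$, so for every such $n$ the integer $s:=\gcd(n,L_t)$ divides both $n$ and $L_{t+1}$, hence $s\mid\gcd(n,L_{t+1})$. By the monotonicity of $f$ established above, $f(\gcd(n,L_{t+1}))\le f(s)=f(\gcd(n,L_t))$ for every $n$ in this range; summing and dividing by $L_{t+1}$ yields $A_{t+1}\le \frac{1}{L_{t+1}}\sum_{n=1}^{L_{t+1}}f(\gcd(n,L_t))$. Finally, writing $L_{t+1}=mL_t$, the quantity $\gcd(n,L_t)$ depends only on $n$ modulo $L_t$, and each residue class modulo $L_t$ is attained exactly $m$ times as $n$ runs over $\{1,\dots,L_{t+1}\}$; thus that last sum equals $m\sum_{a=1}^{L_t}f(\gcd(a,L_t))$, and dividing by $L_{t+1}=mL_t$ gives precisely $A_t$. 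Hence $A_{t+1}\le A_t$.

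I do not expect a serious obstacle: the argument hinges entirely on the ``monotone along divisibility'' property of $f$, immediate from $g\le 1$, together with property (i), which is exactly what makes $\gcd(n,L_t)$ a divisor of $\gcd(n,L_{t+1})$. The only points requiring minor care are the bookkeeping in the gcd-counting identity and the insistence on comparing the two averages over the single range $\{1,\dots,L_{t+1}\}$, rather than over $\{1,\dots,L_t\}$ and $\{1,\dots,L_{t+1}\}$ separately.
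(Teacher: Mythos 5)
Your proof is correct, and it takes a genuinely different and cleaner route than the paper. The paper works directly with the divisor-sum form of $A_t$: it writes $A_t-A_{t+1}=S_1-S_2$, where $S_1$ accounts for the change in the weights $\varphi(L_{t+1}/r)/L_{t+1}$ versus $\varphi(L_t/r)/L_t$ on divisors $r$ of $L_t$, and $S_2$ collects the contribution of the new divisors $s\mid L_{t+1}$, $s\nmid L_t$; the inequality $S_2\le S_1$ is then established by factoring $b=L_{t+1}/L_t$ as $b_r\cdot b_r^*$ for each $r$ and tracking identities for the totient function such as $\varphi(L_{t+1}/r)=b_r^*\varphi(L_t/r)\varphi(b_r)$. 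Your argument replaces all of this bookkeeping by the single reinterpretation $A_t=\frac{1}{L_t}\sum_{n=1}^{L_t}f(\gcd(n,L_t))$, after which the monotonicity of $f$ along divisibility (immediate from $g\le 1$), the divisibility $\gcd(n,L_t)\mid\gcd(n,L_{t+1})$ (which uses property (i)), and the $L_t$-periodicity of $n\mapsto\gcd(n,L_t)$ do the rest. Both proofs use exactly the same two hypotheses ($g\le 1$ and $L_t\mid L_{t+1}$), but yours is shorter and more transparent; the paper's version has the minor advantage of producing an explicit nonnegative expression for part of the difference $A_t-A_{t+1}$, though nothing in the paper exploits this. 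All the steps you flag as needing care (the gcd-counting identity, comparing both averages over the common range $\{1,\dots,L_{t+1}\}$, and the equidistribution of residue classes modulo $L_t$ in that range) are handled correctly.
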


\begin{proof}
From hypothesis, $L_t$ divides $L_{t+1}$ for any $t\ge 1$. Fix $t\ge 1$ and write $L_{t+1}=bL_t$, where $b$ is a positive integer. We have the following identity
$$A_t-A_{t+1}=\underbrace{\frac{1}{L_t}\sum_{r|L_t}f(r) \frac{b\varphi\left(\frac{L_t}{r}\right)-\varphi\left(\frac{L_{t+1}}{r}\right)}{b}}_{S_1}-\underbrace{\frac{1}{L_t}\sum_{s|L_{t+1}\atop{s\nmid L_t}}f(s) \frac{\varphi\left(\frac{L_{t+1}}{s}\right)}{b}}_{S_2}.$$
For each divisor $r$ of $L_t$, write $b=b_r\cdot b_r^*$, where $b_r$ is the greatest divisor of $b$ such that $\gcd\left(b_r, \frac{L_t}{r}\right)=1$. In particular, $\varphi\left(\frac{L_{t+1}}{r}\right)=b_r^*\cdot\varphi\left(\frac{L_t}{r}\right)\cdot \varphi(b_r)$ and so
$$S_1=\frac{1}{L_t}\sum_{r|L_t}f(r)\varphi\left(\frac{L_t}{r}\right)\cdot \left(1-\frac{\varphi(b_r)}{b_r}\right).$$

We claim that $S_2\le S_1$ and this concludes the proof. We observe that each divisor $s$ of $L_{t+1}$ that does not divide $L_t$ can be written uniquely as $s=ru$ with $r$ a divisor of $L_t$ and $u>1$ a divisor of $b$ such that $\gcd\left(\frac{L_t}{r}, u\right)=1$. In particular, if we write $s$ in this way, we have that $f(s)\le f(r)$ since $f(n)=\prod_{d|n}g(d)$ and $g(d)\in [0, 1]$. Therefore, we obtain the following inequalities
$$0\le S_2\le \frac{1}{L_t}\sum_{r|L_t}f(r)\sum_{1<u|b\atop{\gcd\left(u, \frac{L_t}{r}\right)=1}}\frac{\varphi\left(\frac{L_{t+1}}{ur}\right)}{b}=\frac{1}{L_t}\sum_{r|L_t}f(r)\sum_{1<u|b_r}\frac{\varphi\left(\frac{L_{t+1}}{ur}\right)}{b}.$$
For a divisor $r$ of $L_t$ and a divisor $u$ of $b_r$, we have that $\frac{L_{t+1}}{ur}=\frac{L_t}{r}b_r^*\frac{b_r}{u}$, where the set of prime divisors of $b_r^*$ is contained in the set of prime divisors of $\frac{L_t}{r}$. In particular, we obtain that
$$\frac{\varphi\left(\frac{L_{t+1}}{ur}\right)}{b} = \frac{\varphi\left(\frac{L_{t}}{r}\right)\cdot b_r^*\cdot\varphi\left(\frac{b_r}{u}\right)}{b}=\varphi\left(\frac{L_t}{r}\right)\frac{\varphi\left(\frac{b_r}{u}\right)}{b_r}.$$
Therefore, 
$$0\le S_2\le \frac{1}{L_t}\sum_{r|L_t}f(r)\varphi\left(\frac{L_t}{r}\right)\sum_{u|b_r\atop{u\ne b_r}}\frac{\varphi(u)}{b_r}=S_1.$$
\end{proof}

We proceed to the mean value result for $\log f(n)$. From now and on, we assume that there exists $c>0$ such that $g(d)>c$ for every $d\ge 1$. We observe that $\log f(n)=\sum_{d|n}\log g(d)$ and so $\log f$ is the convolution of $\log g$ and $F\equiv 1$. From this fact, we easily obtain that
$$\sum_{n\le x}\log f(n)=x\sum_{d\le x}\frac{\log g(d)}{d}+O\left(\sum_{d\le x}\log g(d)\right).$$
So it suffices to prove that the series $\sum_{d=1}^{\infty}\frac{\log g(d)}{d}$ converges and that $\sum_{d\le x}\log g(d)=o(x)$.
This is done in the following lemma.

\begin{lemma}
Let $g$ be an arithmetic function taking values in $(c, 1]$, where $0<c<1$. Provided that $\sum_{d=1}^{\infty}\frac{1-g(d)}{d}$ converges, the following hold:

\begin{enumerate}[(i)]
\item $\sum_{d=1}^{\infty}\frac{\log g(d)}{d}$ converges;
\item $\sum_{d\le x}\log g(d)=o(x)$.
\end{enumerate}
\end{lemma}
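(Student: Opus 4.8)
The plan is to reduce both statements to the already-assumed convergence of $\sum_{d}\frac{1-g(d)}{d}$ by showing that $-\log g(d)$ is comparable to $1-g(d)$, which is exactly where the uniform lower bound $g(d)>c$ is used. Concretely, since $g$ takes values in $(c,1]$, we have $1-g(d)\in[0,1-c)$ for every $d$, so applying inequality \eqref{eq:log2} of Lemma~\ref{lem:small} with the constant there taken to be $1-c$ and $x=1-g(d)$ yields
$$0\le -\log g(d)=|\log(1-(1-g(d)))|\le \alpha_{1-c}\,(1-g(d)),$$
where $\alpha_{1-c}>0$ depends only on $c$.

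For part (i), I would simply divide the last display by $d$ and sum over $d$: this gives $\sum_{d=1}^{\infty}\frac{|\log g(d)|}{d}\le \alpha_{1-c}\sum_{d=1}^{\infty}\frac{1-g(d)}{d}<\infty$, so the series $\sum_{d}\frac{\log g(d)}{d}$ converges absolutely.

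For part (ii), by the displayed inequality it suffices to prove $\sum_{d\le x}(1-g(d))=o(x)$, since $\left|\sum_{d\le x}\log g(d)\right|\le \alpha_{1-c}\sum_{d\le x}(1-g(d))$. Here I would use the standard tail argument (a form of Kronecker's lemma with weights $d$): given $\varepsilon>0$, choose $M$ with $\sum_{d>M}\frac{1-g(d)}{d}<\varepsilon$, possible since the series converges. Then for $x>M$,
$$\sum_{M<d\le x}(1-g(d))=\sum_{M<d\le x}d\cdot\frac{1-g(d)}{d}\le x\sum_{M<d\le x}\frac{1-g(d)}{d}<\varepsilon x,$$
while $\sum_{d\le M}(1-g(d))\le M$ is a constant. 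Hence $\limsup_{x\to+\infty}\frac1x\sum_{d\le x}(1-g(d))\le \varepsilon$, and letting $\varepsilon\to 0$ gives the claim.

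There is essentially no serious obstacle in this lemma; the only points requiring attention are recognizing that the linear bound $-\log g(d)\le \alpha_{1-c}(1-g(d))$ genuinely needs the uniform bound $g(d)>c$ (without it the constant $\alpha$ would blow up near $g(d)=0$, and this is precisely the hypothesis introduced ``from now on''), and carrying out the Kronecker-type estimate in (ii) to convert convergence of the weighted series into the bound $o(x)$ on the unweighted partial sums.
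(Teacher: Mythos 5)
Your proposal is correct and follows essentially the same route as the paper: part (i) is identical (comparison of $-\log g(d)$ with $1-g(d)$ via inequality \eqref{eq:log2}, using the uniform bound $g(d)>c$), and part (ii) is the same Kronecker-type tail estimate, the only cosmetic differences being that you split the sum at a fixed $M=M(\varepsilon)$ where the paper splits at $\sqrt{x}$, and that you run the estimate on $1-g(d)$ rather than directly on $-\log g(d)$.
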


\begin{proof}
We prove the assertions separately.
\begin{enumerate}[(i)]
\item We observe that $0\le (1-g(d))<1-c$. In particular, Eq.~\eqref{eq:log2} entails that $|\log g(d)|\le \alpha_c \cdot (1-g(d))$ for some absolute constant $\alpha_c$. Since $\sum_{d=1}^{\infty}\frac{1-g(d)}{d}$ converges, it follows that $\sum_{d=1}^{\infty}\frac{\log g(d)}{d}$ converges.

\item  Since $0\le -\log g(d)\le -\log (c)$, we have that 
$$0\le -\sum_{d\le x}\log g(d)\le \underbrace{-\sum_{d\le \sqrt{x}}\log (c)}_{O(\sqrt{x})}-x\left(\sum_{\sqrt{x}<d\le x}\frac{\log g(d)}{d}\right)=o(x),$$
since $\sum_{\sqrt{x}<d\le x}\frac{\log g(d)}{d}=o(1)$ (recall that the series $\sum_{d=1}^{\infty}\frac{\log g(d)}{d}$ converges).
\end{enumerate}
\end{proof}

So it remains to prove that $A_f\ge \exp(A_f^*)$.  Since $g(d)>0$ for every $d\ge 1$, the numbers $f(n)$ are positive. In particular, for every positive integer $N$, the AM-GM inequality yields
$$\frac{1}{N}\sum_{i=1}^Nf(i)\ge \exp\left(\frac{1}{N}\sum_{i=1}^N\log f(i)\right),$$
hence $$0<\exp(A_f^*)\le \limsup\limits_{N\to +\infty}\frac{1}{N}\sum_{i=1}^Nf(i)=A_f.$$

\section{The average density of primitive and normal elements over finite fields}
Throughout this section, $p$ is a prime number, $q=p^m$ is a power of $p$ and $\F_q$ denotes the finite field of $q$ elements. We recall that, up to isomorphism, there exists a unique $n$-degree extension of $\F_q$: such extension has $q^n$ elements and is denoted by $\F_{q^n}$. The field extensions $\F_{q^n}$ have two main algebraic structures. The multiplicative group $\F_{q^n}^*:=\F_{q^n}\setminus\{0\}$ is cyclic and any generator of such group is called {\em primitive}. Moreover, $\F_{q^n}$ (regarded as an $n$-dimensional $\F_q$-vector space) admits a basis $\mathcal C_{\beta}=\{\beta, \beta^q\ldots \beta^{q^n}\}$ comprising the conjugates of an element $\beta\in \F_{q^n}$ by the Galois group $\mathrm{Gal}(\fqn/\F_q)\cong \Z_n$. In this case, such a $\beta$ is called {\em normal } and $\mathcal C_{\beta}$  is a {\em normal basis}. 

Primitive elements are constantly used in cryptographic applications; perhaps, the most notable application is the Diffie-Hellman key exchange~\cite{dh}. Normal bases are also object of interest in applications such as computer algebra, due to their efficiency on basic operations. For instance, if $b=\sum_{i=0}^{n-1}a_i\beta^{q^i}$ and $\mathcal{\beta}$ is a normal element, then $b^q$ is obtained after applying a cyclic shift on the coefficients of $b$ in the basis $\mathcal C_{\beta}$, i.e., $b^q=\sum_{i=0}^{n-1}a_{i-1}\beta^{q^i}$, where the indices are taken modulo $n$. We refer to~\cite{GAO} and the references therein for a nice overview on normal basis, including theory and applications. 

For each positive integer $n$, let $P_q(n)$ and $N_q(n)$ be the number of primitive and normal elements in $\F_{q^n}$, respectively. We observe that the density functions $\R(n):=\frac{P_q(n)}{q^n}$ and $\m(n):=\frac{N_q(n)}{q^n}$ can be viewed as the probability that random element $\alpha\in \F_{q^n}$ is primitive and normal, respectively. It is worth mentioning that some past works explored the behavior of the function $\m(n)$. In~\cite{GP}, the authors provided a lower bound for $\m(n)$ that depends only on the prime factors dividing $n$. Also, $\liminf\limits_{n\to +\infty}\m(n)\sqrt{\log_q n}$ is a positive constant~\cite{F00} and, in particular, $\liminf\limits_{n\to +\infty}\m(n)=0$. In addition, it is well known that $P_q(n)=\varphi(q^n-1)$ and so, by Theorem~15 in~\cite{RS}, we have that $\R(n)\ge \frac{c}{\log n+\log\log q}$ for every $n>1$, where $c>0$ does not depend on $n$ or $q$.

As an application of Theorem~\ref{thm:main}, in this section we prove that such arithmetic functions admit positive mean values and we obtain formulas to compute them. Moreover, we explore the behavior of these mean values as $q\to +\infty$. Our main results can be stated as follows.

\begin{theorem}\label{thm:prim}
For each positive integer $t$, let $L_t$ be the least common multiple of the positive integers $i\le t$. If we set $A_t=\frac{1}{L_t}\sum_{r|L_t}\R(r)\cdot\varphi\left(\frac{L_t}{r}\right)$, then the sequence $\{A_t\}_{t\ge 1}$ converges to a limit $\overline{\rho}_q>0$ and this is the mean value of $\R(n)$, i.e., 
$$\overline{\rho}_q=\lim\limits_{x\to +\infty}\frac{1}{x}\sum_{n\le x}\R(n).$$
Moreover, $\liminf\limits_{q\to +\infty}\overline{\rho}_q=0$.
\end{theorem}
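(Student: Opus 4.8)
The plan is to deduce Theorem~\ref{thm:prim} from Theorem~\ref{thm:main} after replacing $\R(n)=\varphi(q^n-1)/q^n$ by the auxiliary function $\psi_q(n):=\varphi(q^n-1)/(q^n-1)=\prod_{p\mid q^n-1}\bigl(1-\tfrac1p\bigr)$, the product over primes. One cannot apply Theorem~\ref{thm:main} to $\R$ directly, because the function $g$ determined by $\R(n)=\prod_{d\mid n}g(d)$ need not take values in $(0,1]$ --- already $g(2)=\R(2)/\R(1)=4/3$ when $q=3$. The advantage of $\psi_q$ is that a prime $p$ divides $q^n-1$ precisely when $e_p(q)\mid n$, so sorting the prime factors of $q^n-1$ by the value of $e_p(q)$ gives
$$\psi_q(n)=\prod_{d\mid n}g_q(d),\qquad g_q(d):=\prod_{e_p(q)=d}\Bigl(1-\frac1p\Bigr)\in(0,1],$$
where the last product runs over the finitely many primes $p$ with $e_p(q)=d$. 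Hence $g_q$ is $1$-density like, and $\psi_q$ is a function of exactly the type treated in Theorem~\ref{thm:main}, with $N=1$, $L_t=\mathrm{lcm}(1,\dots,t)$ and $h(t)=t$ as in the Remark following that theorem.

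Next I would verify the two hypotheses of Theorem~\ref{thm:main} for $g_q$. For the convergence of $\sum_{d\ge1}\frac{1-g_q(d)}{d}$, Eq.~\eqref{eq:1-x} gives $1-g_q(d)\le\sum_{e_p(q)=d}\tfrac1p$, and interchanging the order of summation,
$$\sum_{d\ge1}\frac{1-g_q(d)}{d}\ \le\ \sum_{p\nmid q}\frac1{p\,e_p(q)};$$
since $p\mid q^{e_p(q)}-1$ forces $q^{e_p(q)}\ge p+1$, hence $e_p(q)>\frac{\log p}{\log q}$, the right-hand side is $\ll_q\sum_p\frac1{p\log p}<\infty$. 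For the uniform lower bound on $g_q$, note that a prime $p$ with $e_p(q)=d$ satisfies $p\ge d+1$ (as $d=e_p(q)\mid p-1$) and that the product of all such primes divides $q^d-1$; hence there are at most $\frac{d\log q}{\log(d+1)}$ of them, so $\sum_{e_p(q)=d}\tfrac1p\le\frac{\log q}{\log 2}$, and Eq.~\eqref{eq:log} then gives $\log g_q(d)\ge-2\sum_{e_p(q)=d}\tfrac1p\ge-\frac{2\log q}{\log2}$, i.e.\ $g_q(d)\ge q^{-2/\log2}>0$. Theorem~\ref{thm:main} now applies and shows that $\widetilde A_t:=\frac1{L_t}\sum_{r\mid L_t}\psi_q(r)\varphi(L_t/r)$ converges to a \emph{positive} limit $\overline{\psi}_q$, which is the mean value of $\psi_q$.

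It remains to carry both statements back from $\psi_q$ to $\R$. Since $0\le\psi_q(n)-\R(n)=\psi_q(n)\,q^{-n}\le q^{-n}$, one has $\bigl|\sum_{n\le x}\R(n)-\sum_{n\le x}\psi_q(n)\bigr|\le\frac1{q-1}$, so $\R$ has the same mean value $\overline{\rho}_q=\overline{\psi}_q>0$. For the sequence $A_t$ in the statement,
$$0\ \le\ \widetilde A_t-A_t\ =\ \sum_{r\mid L_t}\frac{\psi_q(r)}{r\,q^r}\cdot\frac{\varphi(L_t/r)}{L_t/r};$$
each summand is at most the summable quantity $q^{-r}/r$, while for every fixed $r$ one has $\frac{\varphi(L_t/r)}{L_t/r}=\prod_{p\mid L_t/r}\bigl(1-\tfrac1p\bigr)\to0$ as $t\to\infty$, because $L_t/r$ is eventually divisible by every fixed prime. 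By dominated convergence for series (Tannery's theorem), $\widetilde A_t-A_t\to0$, whence $A_t\to\overline{\rho}_q$. I expect this reduction --- recognizing that $\R$ must be replaced by $\psi_q$, and then the transfer of the two limits, especially the Tannery step --- to be the main obstacle; the number-theoretic estimates above are routine.

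Finally, for $\liminf_{q\to+\infty}\overline{\rho}_q=0$: since $q-1\mid q^n-1$, every prime dividing $q-1$ divides $q^n-1$, so $\R(n)\le\psi_q(n)\le\prod_{p\mid q-1}\bigl(1-\tfrac1p\bigr)=\frac{\varphi(q-1)}{q-1}$ for all $n$, and therefore $\overline{\rho}_q\le\frac{\varphi(q-1)}{q-1}$. Given $\varepsilon>0$, choose $y$ with $\prod_{p\le y}\bigl(1-\tfrac1p\bigr)<\varepsilon$ (possible by Mertens' theorem); by Dirichlet's theorem there are arbitrarily large primes $q$ with $q\equiv1\pmod{\prod_{p\le y}p}$, and for any such $q$ we get $\overline{\rho}_q\le\prod_{p\mid q-1}\bigl(1-\tfrac1p\bigr)\le\prod_{p\le y}\bigl(1-\tfrac1p\bigr)<\varepsilon$. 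Hence $\liminf_{q\to+\infty}\overline{\rho}_q=0$.
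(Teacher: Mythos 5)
Your proposal is correct, and its core is the same as the paper's: replace $\R(n)$ by $\varphi(q^n-1)/(q^n-1)$, factor it as $\prod_{d\mid n}g_q(d)$ with $g_q(d)=\prod_{e_q(\ell)=d}(1-1/\ell)$, and apply Theorem~\ref{thm:main} with $N=1$, $L_t=\mathrm{lcm}(1,\dots,t)$, $h(t)=t$. The detail-level differences are worth noting. For the series $\sum_d\frac{1-g_q(d)}{d}$ you interchange the order of summation and use $e_q(\ell)>\log\ell/\log q$ to compare with $\sum_\ell\frac{1}{\ell\log\ell}$, whereas the paper counts the primes with $e_q(\ell)=d$ (the $i$-th is $\ge di+1$) to get $1-g_q(d)=O(\log d/d)$ directly; both are routine. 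For the lower bound on $g_q$ you produce the explicit constant $q^{-2/\log 2}$, while the paper deduces a (non-explicit) bound from $g_q(d)\to 1$ together with $g_q(d)>0$. You are actually more careful than the paper on one point: the theorem's $A_t$ is defined with $\R(r)$, not with the auxiliary function, and your Tannery/dominated-convergence step showing $\widetilde A_t-A_t\to 0$ closes a transfer that the paper passes over with the remark that the two functions differ by at most $q^{-n}$. Finally, for $\liminf_{q\to+\infty}\overline{\rho}_q=0$ you invoke Dirichlet's theorem to find primes $q\equiv 1\pmod{\prod_{p\le y}p}$, while the paper avoids Dirichlet by fixing a prime power $q$ and passing to the subsequence $q^{e_k}$ with $e_k$ chosen so that the first $k$ primes (other than the characteristic) divide $q^{e_k}-1$; your route uses a heavier tool but is equally valid, and both rest on the same observation $\overline{\rho}_q\le\varphi(q-1)/(q-1)$ and Mertens' theorem.
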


\begin{theorem}\label{thm:normal}
For each positive integer $t$, let $L_t$ be the least common multiple of the numbers $q^i-1$, $i\le t$. If we set $A_t=\frac{1}{L_t}\sum_{r|L_t}\m(r)\cdot\varphi\left(\frac{L_t}{r}\right)$, then the sequence $\{A_t\}_{t\ge 1}$ converges to a limit $\overline{\mu}_q>0$ and this is the mean value of $\m(n)$, i.e., 
$$\overline{\mu}_q=\lim\limits_{x\to +\infty}\frac{1}{x}\sum_{n\le x}\m(n).$$
Moreover, $\lim\limits_{q\to +\infty} \overline{\mu}_q=1$.
\end{theorem}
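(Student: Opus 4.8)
The plan is to verify that $\m(n)$ fits into the framework of Theorem~\ref{thm:main} and then invoke it with the specified sequence $L_t$. First I would recall the standard formula for the density of normal elements. Writing $x^n-1=\prod_{j} \Phi_j(x)$ over $\F_q$ and using the well-known characterization of normal elements via the $q$-order, one has
$$\m(n)=\prod_{d\mid n}\left(1-\frac{1}{q^{\ord_d(q)}}\right)^{\frac{\varphi(d)}{\ord_d(q)}},$$
where $\ord_d(q)=e_d(q)$ is the multiplicative order of $q$ modulo $d$ (for $\gcd(d,q)=1$; the factor for $p$-part is handled separately but contributes a term that is $1$ in the relevant range or can be absorbed). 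The point is that this expression is of the form $\prod_{d\mid n} g(d)$ where $g(d)=\left(1-q^{-e_d(q)}\right)^{\varphi(d)/e_d(q)}\in(0,1]$, and $g(d)=1$ whenever $\gcd(d,q)>1$, i.e., whenever $\gcd(d,p)>1$. Hence $g$ is $N$-density like with $N=p$.

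Next I would check the convergence hypothesis $\sum_d \frac{1-g(d)}{d}<\infty$. Using Eq.~\eqref{eq:1-x} in the form $1-g(d)=1-\left(1-q^{-e_d(q)}\right)^{\varphi(d)/e_d(q)}\le \frac{\varphi(d)}{e_d(q)}\cdot q^{-e_d(q)}\le \frac{d}{e_d(q)}q^{-e_d(q)}$, the series is dominated by $\sum_d \frac{q^{-e_d(q)}}{e_d(q)}$. Grouping by the value $k=e_d(q)$: if $e_d(q)=k$ then $d\mid q^k-1$, so there are at most $\sigma_0(q^k-1)$ such $d$, and by Lemma~\ref{lem:estimate-divisor} this is $(q^k-1)^{o(1)}$, hence the tail $\sum_k \sigma_0(q^k-1)\, q^{-k}/k$ converges (geometric decay beats the subexponential divisor count). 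So the hypothesis holds. Moreover $g(d)\ge 1-q^{-1}>0$ is bounded below by an absolute positive constant (for $q\ge 2$), so the second, ``positive mean value'' part of Theorem~\ref{thm:main} applies and gives $\overline{\mu}_q\ge \prod_d g(d)^{1/d}>0$.

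Then I would verify the three conditions on $L_t=\mathrm{lcm}(q^1-1,\dots,q^t-1)$. Divisibility $L_t\mid L_{t+1}$ and $L_t\to\infty$ are immediate. For condition (iii): if $n\ge 1$ satisfies $\gcd(n,p)=1$ and $n\nmid L_t$, then in particular $n\nmid q^k-1$ for all $k\le t$, i.e., $e_n(q)>t$; but $e_n(q)\le \varphi(n)<n$, so $n>t$, and we may take $h(t)=t$. Thus Theorem~\ref{thm:main} applies and yields that $A_t=\frac{1}{L_t}\sum_{r\mid L_t}\m(r)\varphi(L_t/r)$ converges to the mean value $\overline{\mu}_q>0$, which is the first assertion.

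Finally, for $\lim_{q\to\infty}\overline{\mu}_q=1$: since $0<\overline{\mu}_q\le 1$ it suffices to show $\overline{\mu}_q\to 1$, and for this I would use $\overline{\mu}_q\ge \prod_{d\ge 1} g(d)^{1/d}$ from Theorem~\ref{thm:main}, so $-\log \overline{\mu}_q\le -\sum_{d\ge 2}\frac{\log g(d)}{d}\le \alpha \sum_{d\ge 2}\frac{1-g(d)}{d}$ for a suitable constant (using Eq.~\eqref{eq:log2} with, say, $c=1/2$ once $q\ge 2$). By the estimate above this is bounded by $\alpha\sum_{k\ge 1}\sigma_0(q^k-1)\,q^{-k}/k$, which tends to $0$ as $q\to\infty$ by dominated convergence (each term is $O(q^{-k+o(1)})\to 0$ and the series is uniformly summable). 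Hence $\log \overline{\mu}_q\to 0$, i.e., $\overline{\mu}_q\to 1$.

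I expect the main obstacle to be bookkeeping the exact product formula for $\m(n)$ and making the divisor-count estimate fully rigorous and uniform in $q$ — in particular, handling the $p$-part of $x^n-1$ cleanly (when $p\mid n$) so that it genuinely contributes a trivial factor to $g$, and confirming that Lemma~\ref{lem:estimate-divisor} gives enough uniformity for the dominated-convergence argument in the last step. The rest is a routine verification of the hypotheses of Theorem~\ref{thm:main}.
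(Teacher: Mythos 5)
Your proposal follows the paper's route essentially step for step: the same product formula $\m(n)=\prod_{d\mid n}G_q(d)$ with $G_q(d)=\left(1-q^{-e_q(d)}\right)^{\varphi(d)/e_q(d)}$ (coming from $\Phi_q(x^n-1)$ and the factorization of cyclotomic polynomials), the same verification that $G_q$ is $p$-density like and that $h(t)=t$ works for $L_t=\mathrm{lcm}(q-1,\dots,q^t-1)$, the same grouping by $j=e_q(d)$ with the divisor bound $\sigma_0(q^j-1)=O(q^{j/2})$ from Lemma~\ref{lem:estimate-divisor}, and the same lower bound $\overline{\mu}_q\ge\prod_d G_q(d)^{1/d}$ for the limit as $q\to+\infty$. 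The only substantive difference is that the paper's last step is quantitative (Theorem~\ref{thm:normal-big} gives $\overline{\mu}_q>1-\frac1q-\frac1{\sqrt q}$ for $q\ge 4$), whereas your dominated-convergence argument gives only the limit; both are valid.

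One numerical claim you make is false as stated: $G_q(d)\ge 1-q^{-1}$ fails, e.g.\ for $q=7$, $d=6$ one has $e_q(d)=1$, $\varphi(d)=2$ and $G_7(6)=(6/7)^2=36/49<6/7$. This does not damage the argument, since all you need is a uniform positive lower bound, and one does hold: for $e_q(d)=1$ you get $G_q(d)\ge(1-1/q)^{q-1}>e^{-1}$, and for $e_q(d)\ge 2$ Eq.~\eqref{eq:1-x} gives $G_q(d)>1-\frac{1}{e_q(d)}\ge\frac12$, so $G_q(d)>e^{-1}$ uniformly in $q$ and $d$. The same slip propagates to your use of Eq.~\eqref{eq:log2} with $c=1/2$, which requires $1-G_q(d)<1/2$; take instead $c=1-e^{-1}$ (or any $c$ with $1-e^{-1}<c<1$) and the constant $\alpha_c$ is still absolute, so the final estimate $-\log\overline{\mu}_q=O\bigl(\sum_j\sigma_0(q^j-1)/(jq^j)\bigr)=O(q^{-1/2})$ goes through unchanged.
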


\subsection{On the density of primitive elements}
Recall that the number of primitive elements in $\F_{q^n}$ equals $\varphi(q^n-1)$. Let $\mathcal P$ be the set of all prime numbers. Here, $\ell$ usually denotes a prime number. We observe that 
$$f_q(n):=\frac{\varphi(q^n-1)}{q^n-1}=\prod_{\ell\in \q\atop {\ell|q^n-1}}\left(1-\frac{1}{\ell}\right).$$
Recall that $e_q(\ell)$ denotes the order of $q=p^m$ modulo $\ell\in \q\setminus\{p\}$, hence $f_q(n)=\prod_{d|n}g_q(d)$, where 
\begin{equation}\label{eq:gi}g_q(d)=\prod_{\ell\in \q\atop e_q(\ell)=d}\left(1-\frac{1}{\ell}\right),\end{equation}
with the convention that $g_q(d)=1$ if the previous product is empty. We also observe that $0<f_q(n)-\R(n)\le \frac{1}{q^n}$ for every $n\ge 1$. Therefore, we have that either both or none of the functions $f_q(n), \R(n)$ possess mean value and, in the affirmative case, such mean values coincide. So we only need to prove Theorem~\ref{thm:prim} replacing $\R(n)$ by $f_q(n)$. 

\subsubsection{Proof of Theorem~\ref{thm:prim}}
We naturally apply Theorem~\ref{thm:main} for $f(n)=f_q(n)$ and $g(n)=g_q(n)$. Some conditions are easily checked. First, we observe that if $L_t$ denotes the  least common multiple of the positive integers $i\le t$, then $L_t$ divides $L_{t+1}$ and $L_t\ge t$ for any $t\ge 1$. In particular, $\lim\limits_{t\to +\infty}L_t= +\infty$. From definition, the function $g_q(d)$ is $1$-density like. We observe that, for any positive integer $d$ that does not divide $L_t$, we have that $d>t$ and so the function $h$ in Theorem~\ref{thm:main} can be taken as $h(t)=t$. All in all, in order to prove that $\rho_q(n)$ has positive mean value, it suffices to check the following:
\begin{enumerate}
\item the series $\sum_{d=1}^{\infty}\frac{1-g_q(d)}{d}$ converges;
\item there exists  $0<c<1$ such that $g_q(d)>c$ for every $d\ge 1$.
\end{enumerate}
We prove the latter in the following lemma.
\begin{lemma}
Let $g_q(d)$ be as in Eq.~\eqref{eq:gi}. Then, for every integer $d\ge 2$, we have that 
$$0\le 1-g_q(d)\le \sum_{\ell\in \q\atop e_q(\ell)=d}\frac{1}{\ell}=O\left(\frac{\log d}{d}\right).$$
In particular, $\lim\limits_{d\to +\infty} g_q(d)=1$ and the series $\sum_{d=1}^{\infty}\frac{1-g_q(d)}{d}$ converges. 
\end{lemma}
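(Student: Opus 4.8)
The plan is to establish the lemma's three assertions in turn, the real work being the middle estimate. The leftmost inequality $0\le 1-g_q(d)$ is immediate from Eq.~\eqref{eq:gi}, since $g_q(d)$ is a finite product of factors in $(0,1]$. For the middle inequality, write the primes $\ell$ with $e_q(\ell)=d$ as $\ell_1<\cdots<\ell_k$ (with $k=0$, an empty product, if there are none, in which case both sides vanish); applying Eq.~\eqref{eq:1-x} with $x_i=1/\ell_i\in[0,1/2]$ and $e_i=1$ gives
$$g_q(d)=\prod_{i=1}^k\left(1-\frac{1}{\ell_i}\right)\ge 1-\sum_{i=1}^k\frac{1}{\ell_i},$$
which rearranges to $1-g_q(d)\le\sum_{e_q(\ell)=d}1/\ell$.

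The heart of the matter is the bound $\sum_{e_q(\ell)=d}1/\ell=O(\log d/d)$ for $d\ge2$, and this needs two properties of these primes to be used together. Saying $e_q(\ell)=d$ means $q$ has multiplicative order $d$ modulo $\ell$, so simultaneously $\ell\mid q^d-1$ and $d\mid\ell-1$ (the order divides $|(\Z/\ell\Z)^\ast|=\ell-1$). From $d\mid\ell-1$, each $\ell_j\equiv1\pmod d$; since the $j$-th smallest integer exceeding $1$ that is $\equiv1\pmod d$ is $jd+1$, we get $\ell_j\ge jd+1$. From $\ell\mid q^d-1$, the $\ell_j$ are distinct divisors of $q^d-1$, so $\prod_{j=1}^k\ell_j\le q^d-1<q^d$; as each $\ell_j>d$, this forces $d^k<q^d$, i.e.\ $k<\frac{d\log q}{\log d}$, hence $\log k\le\log d+O(1)$ (with the constant depending on the fixed $q$). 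Combining,
$$\sum_{j=1}^k\frac1{\ell_j}\le\sum_{j=1}^k\frac1{jd+1}\le\frac1d\sum_{j=1}^k\frac1j\le\frac{1+\log k}{d}=\frac{\log d+O(1)}{d},$$
which is $O(\log d/d)$ as claimed. I expect this to be the one subtle step: bounding each $1/\ell_j$ crudely by $1/(d+1)$ and multiplying by $k<d\log q/\log d$ only gives the weaker $O(1/\log d)$, which is \emph{not} summable against $1/d$; one genuinely needs both the progression bound $\ell_j\ge jd+1$ (which converts the sum into a harmonic sum over $d$) and the logarithmic size of $k$ (which controls that harmonic sum).

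The last two claims are then immediate. From $0\le1-g_q(d)=O(\log d/d)\to0$ we get $\lim_{d\to\infty}g_q(d)=1$. For the series, the estimate just proved gives $\frac{1-g_q(d)}{d}=O\!\left(\frac{\log d}{d^2}\right)$ for $d\ge2$, while the $d=1$ term is a fixed number in $[0,1)$; comparison with the convergent series $\sum_{d\ge2}\frac{\log d}{d^2}$ shows $\sum_{d=1}^\infty\frac{1-g_q(d)}{d}$ converges. Since moreover $g_q(d)\ge\frac12$ for all large $d$ and $g_q(d)>0$ for the finitely many remaining $d$, there is $c>0$ with $g_q(d)>c$ for all $d\ge1$, so both hypotheses needed to apply Theorem~\ref{thm:main} to $f=f_q$, $g=g_q$ are in place.
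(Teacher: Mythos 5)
Your proof is correct and follows essentially the same route as the paper's: both use $d\mid \ell-1$ to get $\ell_j\ge jd+1$, bound the number $k$ of such primes via $\prod\ell_j< q^d$, and then estimate the resulting harmonic sum by $\frac{1}{d}(\log k+O(1))=O\!\left(\frac{\log d}{d}\right)$. The concluding remarks (convergence by comparison with $\sum\log d/d^2$, and the existence of a uniform lower bound $c$) also match the paper's argument.
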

\begin{proof}
If there is no prime $\ell\in \q$ such that $e_q(\ell)=d$, we obtain that $1-g_q(d)=0$. Otherwise, let $\ell_1^{(d)}< \cdots< \ell_{u(d)}^{(d)}$ be the primes $\ell$ such that $e_q(\ell)=d$. In particular, $d$ divides $\varphi(\ell_i^{(d)})=\ell_i^{(d)}-1$ and so $\ell_i^{(d)}\ge di+1$. Therefore, we have that
$$q^d>\prod_{\ell\in \q\atop{e_q(\ell)=d}}\ell>d^{u(d)}=q^{u(d)\cdot \log_q d},$$
hence $u(d)=O(d)$. From Eq.~\eqref{eq:1-x}, for $d\ge 2$, we have that
$$1-g_q(d)\le \sum_{\ell\in \q\atop{e_q(\ell)=d}}\frac{1}{\ell}< \sum_{i=1}^{u(d)}\frac{1}{di}=O\left(\frac{\log d}{d}\right).$$
In particular, $\lim\limits_{d\to +\infty} g_q(d)=1$ and $\frac{1-g_q(d)}{d}=O\left(\frac{\log d}{d^2}\right)$ for $d\ge 2$. Since the series $\sum_{d=1}^{\infty}\frac{\log d}{d^2}$ converges, the same holds for $\sum_{d=1}^{\infty}\frac{1-g(d)}{d}$.

\end{proof}

It remains to prove that $\liminf\limits_{q\to +\infty} \overline{\rho}_q=0$. We observe that, since $q-1$ divides $q^n-1$ for any positive integer $n$, we have that $\overline{\rho}_q\le \frac{\varphi(q-1)}{q-1}$. We obtain the following result.

\begin{proposition}
Let $q$ be a power of a prime $p$. Then $\liminf\limits_{i\to +\infty} \overline{\rho}_{q^i}=0$.
\end{proposition}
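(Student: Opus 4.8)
The plan is to use the bound observed just before the statement, namely $\overline{\rho}_q\le\frac{\varphi(q-1)}{q-1}$, which holds for every prime power $q$ because $q-1\mid q^n-1$ for all $n\ge 1$ and hence each value $f_q(n)$ is $\le \frac{\varphi(q-1)}{q-1}$. Applying it with $q$ replaced by the prime power $q^i$ gives $\overline{\rho}_{q^i}\le\frac{\varphi(q^i-1)}{q^i-1}$, so it suffices to produce positive integers $m_1,m_2,\ldots$ with $m_k\to+\infty$ for which $\frac{\varphi(q^{m_k}-1)}{q^{m_k}-1}\to 0$ as $k\to+\infty$.

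To build them, enumerate the primes distinct from $p$ as $\ell_1<\ell_2<\cdots$. By Fermat's little theorem $e_q(\ell_j)\mid\ell_j-1$, so $e_q(\ell_j)$ is finite and we may set $m_k:=\mathrm{lcm}(e_q(\ell_1),\ldots,e_q(\ell_k))$. Since $e_q(\ell_j)\mid m_k$ for every $j\le k$, each $\ell_j$ with $j\le k$ divides $q^{m_k}-1$, whence
$$\frac{\varphi(q^{m_k}-1)}{q^{m_k}-1}=\prod_{\ell\in\q\atop{\ell|q^{m_k}-1}}\left(1-\frac1\ell\right)\le\prod_{j=1}^{k}\left(1-\frac1{\ell_j}\right).$$
Next, $m_k\to+\infty$, since $q^{m_k}-1\ge\prod_{j=1}^{k}\ell_j$ is unbounded in $k$ and so the exponents $m_k$ cannot stay bounded. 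Finally, $\prod_{j=1}^{k}\left(1-\frac1{\ell_j}\right)\to 0$ as $k\to+\infty$, the classical consequence of the divergence of $\sum_j\frac1{\ell_j}$ (the sum of reciprocals of the primes). Combining with $\overline{\rho}_{q^{m_k}}\ge 0$ yields $\lim_{k\to+\infty}\overline{\rho}_{q^{m_k}}=0$, and since $m_k\to+\infty$ this gives $\liminf_{i\to+\infty}\overline{\rho}_{q^i}=0$.

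The argument is short; the one point that should be checked carefully is that $m_k\to+\infty$, which is what makes the vanishing along the subsequence $(m_k)$ an honest statement about $\liminf_{i\to+\infty}\overline{\rho}_{q^i}$ rather than a claim about a finite set of exponents. Beyond that it uses only Fermat's little theorem and the divergence of $\sum 1/\ell$, so I do not expect a genuine obstacle here.
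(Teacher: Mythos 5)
Your proof is correct and follows essentially the same route as the paper: the paper takes $e_k$ to be the multiplicative order of $q$ modulo the product $\alpha_k$ of the first $k$ primes distinct from $p$ (which equals your $m_k=\mathrm{lcm}(e_q(\ell_1),\dots,e_q(\ell_k))$), bounds $\overline{\rho}_{q^{e_k}}$ by $\prod_{\ell\mid\alpha_k}(1-1/\ell)$, and invokes $\prod_{\ell}(1-1/\ell)=0$. Your explicit verification that $m_k\to+\infty$ is a point the paper leaves implicit, but otherwise the arguments coincide.
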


\begin{proof}
Recall that $\rho_{q^i}\le \frac{\varphi(q^{i}-1)}{q^{i}-1}$ for every integer $i\ge 1$. Let $\alpha_k$ be the product of the first $k$ prime numbers, distinct from $p$, and let $e_k$ be the least positive integer such that $q^{e_k}-1$ is divisible by $\alpha_k$. Therefore, 
$$\overline{\rho}_{q^{e_k}}\le \frac{\varphi(q^{e_k}-1)}{q^{e_k}-1}\le \prod_{\ell \in \q\atop \ell |\alpha_k}\left(1-\frac{1}{\ell}\right).$$
Since $\prod_{\ell\in \q}\left(1-\frac{1}{\ell}\right)=0$, we have that $\lim\limits_{k\to +\infty }\prod_{\ell \in \q\atop \ell |\alpha_k}\left(1-\frac{1}{\ell}\right)=0$, from where the result follows.
\end{proof}

\subsection{On the density of normal elements}
We recall that the number of primitive elements in a finite field is given implicitly by the Euler Totient function. There is an analogue of such function for polynomials over finite fields and this analog function plays an important role in counting normal elements.  The Euler Totient function  $\Phi_q$ for polynomials over $\F_q$ is defined as follows: for an irreducible polynomial $g\in \F_q[x]$ of degree $r$ and a positive integer $m$, we set $\Phi_q(g^m)=q^{(m-1)r}(q^r-1)$ and then $\Phi_q$ extends multiplicatively. We have the following result.
\begin{theorem}[\cite{LiNi} Theorem~3.73]
\label{thm:normal-1}
For any positive integer $n$, the number of elements in $\F_{q^n}$ that are normal over $\F_q$ equals $\Phi_q(x^n-1)$.
\end{theorem}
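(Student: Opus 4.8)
The plan is to recast the notion of ``normal'' in module-theoretic terms and then reduce the count to counting units of a quotient ring, in exact analogy with the classical fact that a cyclic group of order $m$ has $\varphi(m)$ generators. First I would turn $\fqn$ into a module over the polynomial ring $\fq[x]$ by letting the indeterminate $x$ act as the Frobenius automorphism $\sigma\colon\alpha\mapsto\alpha^q$; that is, $\left(\sum_i a_ix^i\right)\cdot\alpha=\sum_i a_i\alpha^{q^i}$. Since $\sigma^n=\mathrm{id}$ on $\fqn$, the polynomial $x^n-1$ annihilates every element, so $\fqn$ is in fact a module over $R:=\fq[x]/(x^n-1)$. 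Under this dictionary, the $\fq$-span of $\{\beta,\beta^q,\dots,\beta^{q^{n-1}}\}=\{\sigma^i(\beta)\}$ is exactly the cyclic submodule $R\beta$, and $\beta$ is normal precisely when its $n$ conjugates are $\fq$-linearly independent, i.e. precisely when $\beta$ generates $\fqn$ as an $R$-module.

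The crux of the argument is to show that $\fqn$ is a free $R$-module of rank one, so that the normal elements are exactly the $R$-module generators, which in turn are exactly the units of $R$. To this end I would view $\sigma$ as an $\fq$-linear endomorphism of the $n$-dimensional space $\fqn$ and identify its minimal polynomial. It divides $x^n-1$, so has degree at most $n$; on the other hand, the Dedekind--Artin theorem on the linear independence of the distinct automorphisms $\mathrm{id},\sigma,\dots,\sigma^{n-1}$ over $\fqn$ forbids any nontrivial $\fq$-relation $\sum_{i=0}^{n-1}a_i\sigma^i=0$, so the minimal polynomial has degree at least $n$. Hence it equals $x^n-1$ and has full degree $n=\dim_{\fq}\fqn$. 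By the structure theorem for finitely generated modules over the PID $\fq[x]$, equality of the minimal-polynomial degree with the dimension forces a single invariant factor, giving an $R$-module isomorphism $\fqn\cong R$. A generator of $R$ as a module over itself is exactly a unit, so the number of normal elements equals $|R^\times|$.

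It then remains to compute $|R^\times|$. Factoring $x^n-1=\prod_j g_j^{e_j}$ into powers of distinct monic irreducibles over $\fq$, the Chinese Remainder Theorem gives $R\cong\prod_j \fq[x]/(g_j^{e_j})$, whence $|R^\times|=\prod_j |(\fq[x]/(g_j^{e_j}))^\times|$. For a single local factor with $g_j$ irreducible of degree $r_j$, the ring $\fq[x]/(g_j^{e_j})$ has $q^{e_jr_j}$ elements and its non-units are precisely the multiples of $g_j$, of which there are $q^{(e_j-1)r_j}$; thus this factor contributes $q^{e_jr_j}-q^{(e_j-1)r_j}=q^{(e_j-1)r_j}(q^{r_j}-1)=\Phi_q(g_j^{e_j})$. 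Multiplying over $j$ and using the multiplicativity of $\Phi_q$ on the coprime parts yields $|R^\times|=\prod_j\Phi_q(g_j^{e_j})=\Phi_q(x^n-1)$, as claimed. I note that no separability hypothesis is needed: if $p\mid n$ the factorization simply has some $e_j>1$, which the definition of $\Phi_q$ already accommodates.

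The main obstacle is the middle step, namely establishing that $\fqn$ is cyclic (equivalently free of rank one) as an $R$-module, i.e. that the minimal polynomial of Frobenius attains the full degree $n$. Everything downstream --- the correspondence between module generators and ring units, together with the CRT unit count --- is then formal. The linear-independence-of-automorphisms input is exactly the nontrivial ingredient; it is worth emphasizing that it simultaneously reproves the existence of a normal basis, since $R^\times\ni 1$ is nonempty, so no separate appeal to the normal basis theorem is required as a black box.
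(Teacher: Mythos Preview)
Your argument is correct and is essentially the standard proof of this result; there is no gap. The module-theoretic translation via the Frobenius action, the use of Artin--Dedekind linear independence to pin down the minimal polynomial of $\sigma$ as $x^n-1$ (hence cyclicity of $\fqn$ over $R=\fq[x]/(x^n-1)$), and the CRT count of units are all sound, and your remark that the case $p\mid n$ is absorbed by allowing repeated irreducible factors is exactly right.

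Note, however, that the paper does not supply its own proof of this statement: it is quoted as Theorem~3.73 of Lidl--Niederreiter and used as a black box. So there is nothing in the paper to compare against beyond the citation. What you have written is precisely the proof one finds in that reference (or in Gao's thesis cited as~\cite{GAO}): the same $\fq[x]$-module viewpoint, the same identification of normal elements with cyclic generators, and the same unit count. If anything, your presentation is slightly more self-contained in that you observe the argument simultaneously yields the existence of a normal basis without invoking the normal basis theorem separately.
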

Recall that for any positive integer $d$, not divisible by $p$, the $d$-th cyclotomic polynomial is defined as $E_d(x)=\prod_{\alpha\in \Omega(d)}(x-\alpha)$, where $\Omega(d)$ is the set of primitive $d$-th roots of unity. The degree of $E_d$ equals $\varphi(d)$ and, over finite fields, $E_d$ has nice factorization.
\begin{lemma}[\cite{LiNi} Theorem 2.47]\label{lem:cyclo}
For any positive integer $d$ that is not divisible by $p$, $E_d(x)$ factors into $\frac{\varphi(d)}{e_q(d)}$ distinct irreducible polynomials over $\F_q$, each of degree $e_q(d)$.
\end{lemma}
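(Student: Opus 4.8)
The plan is to reduce the factorization statement to a statement about roots. Since $q=p^m$ and $p\nmid d$, the polynomial $x^d-1$ is separable over $\fq$ (its formal derivative $dx^{d-1}$ is nonzero and shares no root with it), hence so is its divisor $E_d$; in particular $E_d$ is squarefree and its root set in a fixed algebraic closure $\overline{\fq}$ is exactly the $\varphi(d)$ distinct primitive $d$-th roots of unity. The whole lemma will follow once I show that every such root has degree precisely $e_q(d)$ over $\fq$.

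To establish that, I would fix a primitive $d$-th root of unity $\alpha$, so $\alpha$ has multiplicative order $d$ in $\overline{\fq}^{\,*}$. For $j\ge 1$ one has $\alpha\in\F_{q^j}$ if and only if $\alpha^{q^j}=\alpha$, i.e. $\alpha^{q^j-1}=1$, i.e. $d\mid q^j-1$; hence the least $j$ for which $\alpha\in\F_{q^j}$ is the multiplicative order $e_q(d)$ of $q$ modulo $d$ (well defined since $\gcd(q,d)=1$). Consequently $\fq(\alpha)=\F_{q^{e_q(d)}}$, and the minimal polynomial of $\alpha$ over $\fq$ has degree $[\fq(\alpha):\fq]=e_q(d)$.

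Finally I would assemble the count. Each root $\alpha$ of $E_d$ contributes its minimal polynomial $m_\alpha\in\fq[x]$, which divides $E_d$ and has degree $e_q(d)$; conversely every monic irreducible factor of $E_d$ over $\fq$ is $m_\alpha$ for some primitive $d$-th root of unity $\alpha$, hence again of degree $e_q(d)$. Since $E_d$ is squarefree it equals the product of its distinct monic irreducible factors, so comparing degrees yields that there are $\varphi(d)/e_q(d)$ of them (an integer, because $e_q(d)$ is the order of $q$ in the group $(\Z/d\Z)^{\times}$ of order $\varphi(d)$). The only points requiring care are the separability of $E_d$ — which is exactly where the hypothesis $p\nmid d$ enters — and the equivalence $\alpha\in\F_{q^j}\iff d\mid q^j-1$, which is what forces all irreducible factors to share the common degree $e_q(d)$; neither constitutes a genuine obstacle.
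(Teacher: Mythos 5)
Your argument is correct and complete: separability of $x^d-1$ (using $p\nmid d$), the equivalence $\alpha\in\F_{q^j}\iff d\mid q^j-1$ for a primitive $d$-th root of unity $\alpha$, and the degree count together give exactly the stated factorization. The paper itself offers no proof here --- it imports the result as Theorem~2.47 of Lidl--Niederreiter --- and your argument is essentially the standard one found there, so there is nothing to reconcile.
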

The cyclotomic polynomials satisfy the recursive identity $x^n-1=\prod_{d|n}E_d(x)$ if $\gcd(n, p)=1$. In addition, for $n=p^u\cdot M$ with $\gcd(M, p)=1$, we have that $x^n-1=\prod_{d|M}E_d(x)^{p^u}$. Therefore, from Theorem~\ref{thm:normal-1} and Lemma~\ref{lem:cyclo}, we have the following identity for an arbitrary integer $n\ge 1$ not divisible by $p$:
\begin{equation}\label{eq:normal}
\m(n)=\prod_{d|n}\left(1-\frac{1}{q^{e_q(d)}}\right)^{\frac{\varphi(d)}{e_q(d)}}.
\end{equation}
In addition, $\m(n)=\m(np^j)$ for every $j\ge 0$. Therefore, if we set

\begin{equation}\label{eq:GI}G_q(d)=\begin{cases}\left(1-\frac{1}{q^{e_q(d)}}\right)^{\frac{\varphi(d)}{e_q(d)}}&\text{if}\; \gcd(p, d)=1,\\
1&\text{otherwise,}\end{cases}\end{equation}
we have that $\m(n)=\prod_{d|n}G_q(d)$. 

\subsubsection{Proof of Theorem~\ref{thm:normal}} We prove Theorem~\ref{thm:normal} by applying Theorem~\ref{thm:main} with $f(n)=\mu_q(n)$ and $g(n)=G_q(n)$. Some conditions are easy to check. From construction, $G_q(d)$ is $p$-density like, where $p$ is the characteristic of $\F_q$.  Let $L_t$ be the least common multiple of the numbers $q^i-1$ with $i\le t$. In particular, $L_t$ divides $L_{t+1}$ and $L_t\ge q^t-1$ for any $t\ge 1$.  Therefore, $\lim\limits_{t\to +\infty}L_t= +\infty$. We observe that $e_q(d)>t$ for every positive integer $d$ that is relatively prime with $p$ and does not divide $L_t$. Since $e_q(d)\le d$,  the function $h$ in Theorem~\ref{thm:main} can be taken as $h(t)=t$. In overall, in order to prove that $\rho_q(n)$ has positive mean value, it suffices to check the following:
\begin{enumerate}
\item the series $\sum_{d=1}^{\infty}\frac{1-G_q(d)}{d}$ converges;
\item there exists  $0<c<1$ such that $G_q(d)>c$ for every $d\ge 1$.
\end{enumerate}
We prove the latter in the following lemma.
\begin{lemma}\label{lem:aux-normal}
Let $G_q(d)$ be as in Eq.~\eqref{eq:gi}. Then, for any $d\ge 1$ with $\gcd(d, p)=1$ we have that
$$0\le 1-G_q(d)\le \frac{\varphi(d)}{q^{e_q(d)}e_q(d)}\le \frac{\log q}{\log (d+1)}.$$
In particular, $\lim\limits_{d\to +\infty} G_q(d)=1$ and the series $\sum_{d=1}^{\infty}\frac{1-G_q(d)}{d}$ converges.
\end{lemma}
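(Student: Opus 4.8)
The plan is to establish the three displayed inequalities in order and then read off both the limit and the convergence of the series. Throughout write $e=e_q(d)$, and note that only the integers $d$ with $\gcd(d,p)=1$ matter, since $1-G_q(d)=0$ otherwise. For such a $d$ the exponent $\varphi(d)/e$ is a positive integer, because $e$ is the multiplicative order of $q$ modulo $d$ and hence divides $\varphi(d)$; moreover $0<1-q^{-e}\le 1$, so that $G_q(d)=(1-q^{-e})^{\varphi(d)/e}\in(0,1]$, which is the first inequality $1-G_q(d)\ge 0$. For the middle inequality, apply Eq.~\eqref{eq:1-x} with a single factor (that is, with $x_1=q^{-e}\in[0,1]$ and $e_1=\varphi(d)/e\ge 1$), obtaining $G_q(d)\ge 1-\frac{\varphi(d)}{e}q^{-e}$, i.e. $1-G_q(d)\le\frac{\varphi(d)}{q^{e}e}$.

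For the last inequality I would use the elementary fact that $e=e_q(d)$ forces $d\mid q^{e}-1$, hence $d+1\le q^{e}$. This gives at once $\varphi(d)\le d<q^{e}$, so that $\frac{\varphi(d)}{q^{e}e}\le\frac1e$, and also $e\log q=\log(q^{e})\ge\log(d+1)$, so that $\frac1e\le\frac{\log q}{\log(d+1)}$. Chaining these estimates yields $1-G_q(d)\le\frac{\varphi(d)}{q^{e}e}\le\frac{\log q}{\log(d+1)}$, and letting $d\to+\infty$ gives $\lim_{d\to+\infty}G_q(d)=1$ immediately.

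The one step requiring genuine work is the convergence of $\sum_{d\ge 1}\frac{1-G_q(d)}{d}$: the clean bound $\frac{\log q}{\log(d+1)}$ does not suffice, since $\sum_{d}\frac1{d\log(d+1)}$ diverges. Instead I would keep the middle bound and exploit the scarcity of the $d$ sharing a prescribed value of $e_q(d)$. For $\gcd(d,p)=1$ one has $\frac{1-G_q(d)}{d}\le\frac{\varphi(d)}{d\,q^{e}e}\le q^{-e}$ (using $\varphi(d)\le d$ and $e\ge 1$), and since $e_q(d)=e$ forces $d\mid q^{e}-1$, the number of such $d$ is at most $\sigma_0(q^{e}-1)$. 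Hence
$$\sum_{d\ge 1}\frac{1-G_q(d)}{d}\le\sum_{e\ge 1}\frac{\sigma_0(q^{e}-1)}{q^{e}}.$$
By Lemma~\ref{lem:estimate-divisor} the exponent in $\sigma_0(q^{e}-1)<(q^{e}-1)^{1.1/\log\log(q^{e}-1)}$ tends to $0$ as $e\to+\infty$, so $\sigma_0(q^{e}-1)\le q^{e/2}$ for all sufficiently large $e$; the tail of the right-hand series is then dominated by the convergent geometric series $\sum_{e}q^{-e/2}$, while the finitely many remaining terms are harmless. This gives the convergence. The main obstacle is precisely this last estimate, and the decisive idea is to regroup the sum according to the common value $e_q(d)$ and insert the subexponential divisor bound of Lemma~\ref{lem:estimate-divisor}.
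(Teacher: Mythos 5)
Your proposal is correct and follows essentially the same route as the paper: the single-factor application of Eq.~\eqref{eq:1-x} for the middle bound, the observation $\varphi(d)\le d<q^{e_q(d)}$ for the last bound, and, for the convergence of the series, regrouping by the common value $j=e_q(d)$, counting the relevant $d$ by $\sigma_0(q^j-1)$, and invoking Lemma~\ref{lem:estimate-divisor} to get a bound of order $q^{j/2}$. The only (immaterial) difference is that you discard the factor $\frac{1}{j}$ before summing, arriving at $\sum_j \sigma_0(q^j-1)q^{-j}$ instead of the paper's $\sum_j \sigma_0(q^j-1)(jq^j)^{-1}$; your explicit remark that the bound $\frac{\log q}{\log(d+1)}$ alone is too weak for convergence is a point the paper leaves implicit.
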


\begin{proof}
 If $\gcd(d, p)=1$,  Eq.~\eqref{eq:1-x} entails that $G_q(d)\ge 1-\frac{\varphi(d)}{e_q(d)q^{e_q(d)}}$ and so
$$0\le 1-G_q(d)\le \frac{\varphi(d)}{e_q(d)q^{e_q(d)}}\le \frac{\log q}{\log (d+1)},$$
for every $d\ge 1$ since $\varphi(d)\le d<q^{e_q(d)}$. We observe that the numbers $1-G_q(d)$ are non negative and $1-G_q(d)=0$ if $\gcd(d, p)>1$. In particular, we have that
\begin{align*}0\le \sum_{d=1}^{\infty}\frac{1-G_q(d)}{d}= &\sum_{d=1\atop{\gcd(d, P)=1}}^{\infty}\frac{1-G_q(d)}{d}\le \sum_{d=1\atop{\gcd(d, P)=1}}^{\infty} \frac{\varphi(d)}{de_q(d)q^{e_q(d)}}=\\ {}&\sum_{j=1}^{\infty}\frac{1}{jq^j}\sum_{e_q(d)=j}\frac{\varphi(d)}{d}\le \sum_{j=1}^{\infty}\frac{\sigma_0(q^j-1)}{jq^j},\end{align*}
where in the last inequality we used the fact that $\varphi(d)\le d$ and that the number of positive integers $d$ with $e_q(d)=j$ is at most the number $\sigma_0(q^j-1)$ of positive divisors of $q^j-1$. From Lemma~\ref{lem:estimate-divisor}, $\sigma_0(q^j-1)=O(q^{j/2})$ and so
$$\sum_{j=1}^{\infty}\frac{\sigma_0(q^j-1)}{jq^j}<+\infty.$$
\end{proof}
It remains to prove that $\overline{\mu}_q\to 1$ as $q\to +\infty$. In fact, we prove something stronger.

\begin{theorem}\label{thm:normal-big}
If $q\ge 4$ is a prime power and $\overline{\mu}_q$ is the mean value of $\mu_q(n)$, then
$$1-\frac{1}{q}-\frac{1}{\sqrt{q}}< \overline{\mu}_q\le 1-\frac{1}{q},$$
%
\end{theorem}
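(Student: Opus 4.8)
The upper bound is immediate: for every $n$ we have $1\mid n$, and since $e_q(1)=\varphi(1)=1$ the factor $G_q(1)=(1-q^{-1})^{1}=1-\tfrac1q$; as $G_q(d)\in(0,1]$ for every $d$ it follows that $\m(n)=\prod_{d\mid n}G_q(d)\le G_q(1)=1-\tfrac1q$, whence $\overline{\mu}_q\le 1-\tfrac1q$. For the lower bound I would use the last conclusion of Theorem~\ref{thm:main}: $\overline{\mu}_q\ge\prod_{d\ge1}G_q(d)^{1/d}=(1-\tfrac1q)\prod_{d\ge2}G_q(d)^{1/d}$. Writing $G_q(d)^{1/d}=1-x_d$ with $x_d\ge0$ and applying Eq.~\eqref{eq:1-x} to the partial products gives $\prod_{d\ge2}G_q(d)^{1/d}\ge1-\sum_{d\ge2}x_d$, while Bernoulli's inequality $(1-y)^{1/d}\ge1-y/d$ gives $x_d\le(1-G_q(d))/d$. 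Hence $\overline{\mu}_q\ge(1-\tfrac1q)(1-\Sigma)$ with $\Sigma:=\sum_{d\ge2,\ \gcd(d,p)=1}\tfrac{1-G_q(d)}{d}$, and since $(1-\tfrac1q)(1-\Sigma)>1-\tfrac1q-\tfrac1{\sqrt q}$ is equivalent to $\Sigma<\tfrac{\sqrt q}{q-1}$, the whole problem reduces to this single estimate.

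To bound $\Sigma$ I would split according to $j=e_q(d)$. The terms with $j=1$ come from the divisors $d\ge2$ of $q-1$; there $G_q(d)=(1-\tfrac1q)^{\varphi(d)}$, so $1-G_q(d)\le\varphi(d)/q$ by Eq.~\eqref{eq:1-x}, and their total is at most $\tfrac{B-1}{q}$, where $B:=\sum_{d\mid q-1}\tfrac{\varphi(d)}{d}=\prod_{\ell^{a}\parallel q-1}\bigl(1+a(1-\tfrac1\ell)\bigr)$. For $j\ge2$, Lemma~\ref{lem:aux-normal} gives $1-G_q(d)\le\tfrac{\varphi(d)}{e_q(d)q^{e_q(d)}}$; since every such $d$ divides $q^{j}-1$, the estimates $\varphi(d)/d\le1$ and $\#\{d:e_q(d)=j\}\le\sigma_0(q^{j}-1)$ bound their total by $\sum_{j\ge2}\tfrac{\sigma_0(q^{j}-1)}{jq^{j}}$. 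Using the elementary bound $\sigma_0(m)\le2\sqrt m$, the $j=2$ term is $<\tfrac1q$ and $\sum_{j\ge3}\tfrac{\sigma_0(q^{j}-1)}{jq^{j}}<2\sum_{j\ge3}\tfrac{q^{-j/2}}{j}\le\tfrac23\sum_{j\ge3}q^{-j/2}=\tfrac{2}{3q(\sqrt q-1)}$. Altogether
\[\Sigma\ \le\ \frac{B-1}{q}+\frac1q+\frac{2}{3q(\sqrt q-1)}\ =\ \frac{B}{q}+\frac{2}{3q(\sqrt q-1)}.\]

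The final step is to show $\tfrac{B}{q}+\tfrac{2}{3q(\sqrt q-1)}<\tfrac{\sqrt q}{q-1}$ for every prime power $q\ge4$. The crucial input is that $B<\sqrt q$: from the product formula, $B^{2}/(q-1)=\prod_{\ell^{a}\parallel q-1}\tfrac{(1+a(1-1/\ell))^{2}}{\ell^{a}}$, and every factor is $\le1$ except the one for $(\ell,a)=(2,1)$, which equals $\tfrac98$; a short elementary case analysis (the only binding configuration being $q-1=2\cdot(\text{small odd number})$) then yields $B^{2}<q$ in all cases. Granting $B<\sqrt q$, it suffices to verify $\tfrac1{\sqrt q}+\tfrac2{3q(\sqrt q-1)}\le\tfrac{\sqrt q}{q-1}$; with $s=\sqrt q$, after clearing the positive factor $3s^{2}(s-1)(s+1)$ this reduces to $3s^{3}-s+2\le3s^{3}$, i.e. $s\ge2$, which holds since $q\ge4$. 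As the inequality $B<\sqrt q$ is strict, we obtain $\Sigma<\tfrac{\sqrt q}{q-1}$, which completes the proof.

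The only genuinely delicate aspect is the constant bookkeeping, and that is where I expect the main difficulty to lie: one must route the $j=1$ contribution through the sharp quantity $B=\prod(1+a(1-1/\ell))$ rather than the cruder $\sigma_0(q-1)$ (which is not small enough), establish $B<\sqrt q$ for all prime powers $q\ge4$ (with $q=5$ and $q=7$ the borderline cases), and split the higher-$j$ tail at $j\ge3$ so that the residual $\tfrac{2}{3q(\sqrt q-1)}$ is exactly absorbed by the available slack $\tfrac{\sqrt q}{q-1}-\tfrac1{\sqrt q}=\tfrac1{\sqrt q(q-1)}$ precisely when $q\ge4$. Everything else is a routine application of Theorem~\ref{thm:main}, Lemma~\ref{lem:aux-normal}, and Eq.~\eqref{eq:1-x}.
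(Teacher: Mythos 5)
Your upper bound and the overall plan---invoking $A_f\ge \prod_{d\ge1}g(d)^{1/d}$ from Theorem~\ref{thm:main} and then estimating the infinite product---coincide with the paper's, and much of your later bookkeeping (the identity for $B$, the bound $B<\sqrt{q}$, the tail estimates for $j\ge 2$, the final algebra in $s=\sqrt{q}$) is correct as far as it goes. The proof breaks, however, at the very first reduction. You assert ``Bernoulli's inequality $(1-y)^{1/d}\ge 1-y/d$'' in order to get $x_d\le (1-G_q(d))/d$; for exponents $\alpha=1/d\in(0,1)$ Bernoulli goes the \emph{other} way: $t\mapsto (1-t)^{\alpha}$ is concave, hence lies below its tangent at $t=0$, so $(1-y)^{1/d}\le 1-y/d$ for $y\in[0,1)$, with strict inequality when $y>0$ and $d\ge 2$. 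Thus $x_d=1-G_q(d)^{1/d}\ge (1-G_q(d))/d$, i.e.\ your claimed upper bound on $x_d$ is in fact a lower bound, and the reduction of the theorem to the single estimate $\sum_{d\ge2}\frac{1-G_q(d)}{d}<\frac{\sqrt{q}}{q-1}$ is unjustified. The step also fails term by term: for $q=4$ and $d=3$ one has $G_4(3)=(3/4)^{2}=9/16$, so $G_4(3)^{1/3}\approx 0.826$, while $1-\frac{1-9/16}{3}=1-\frac{7}{48}\approx 0.854$.

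The paper sidesteps this by grouping the product by $j=e_q(d)$ rather than by $d$: $\prod_{d}G_q(d)^{1/d}=\prod_{j\ge1}\left(1-q^{-j}\right)^{s_q(j)/j}$ with $s_q(j)=\sum_{e_q(d)=j}\varphi(d)/d$. Exponents $\ge 1$ are handled by Eq.~\eqref{eq:1-x} (which is only stated for exponents in $[1,+\infty)$), and exponents $<1$ are rounded up to $1$ via $(1-x)^t\ge 1-x$; this yields $\overline{\mu}_q\ge 1-\sum_{j}\delta_q(j)q^{-j}$ with $\delta_q(j)=\max\{1,s_q(j)/j\}\le q^{j/2}/j$, whence $\sum_{j}\frac{1}{jq^{j/2}}=-\log\left(1-q^{-1/2}\right)<q^{-1}+q^{-1/2}$ by Eq.~\eqref{eq:log}. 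Be aware that repairing your decomposition by the same rounding is not automatic: the rounding costs extra terms of size $q^{-j}$ exactly where your exponents $\frac{\varphi(d)}{e_q(d)d}$ are small, and with the generic inputs you use ($B<\sqrt q$, $\sigma_0(m)\le 2\sqrt m$) the resulting inequality becomes borderline or fails at $q=4$ unless one computes the actual values of $s_q(j)$ there. The uniform bound $\delta_q(j)\le q^{j/2}/j$ combined with the exact logarithmic series is what makes the statement close cleanly for all $q\ge 4$.
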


\begin{proof}
We have proved that $\overline{\mu}_q$ exists and it is positive. From the trivial bound $\mu_q(n)\le 1-\frac{1}{q}$, we conclude that $\overline{\mu}_q\le 1-\frac{1}{q}$. Moreover, Theorem~\ref{thm:main} and Eq.~\eqref{eq:GI} entail that, for $s_q(j)=\sum_{e_q(d)=j}\frac{\varphi(d)}{d}$, the following holds
$$\overline{\mu}_q\ge \prod_{j=1}^{\infty}\left(1-\frac{1}{q^j}\right)^{\frac{s_q(j)}{j}}.$$
We observe that $(1-x)^t\ge 1-x$ for every $x, t\in (0, 1)$. The latter, combined with Eq.~\eqref{eq:1-x}, yields the following inequality $$\prod_{j=1}^{\infty}\left(1-\frac{1}{q^j}\right)^{\frac{s_q(j)}{j}}\ge 1-\sum_{j=1}^{\infty}\frac{\delta_q(j)}{q^j},$$
where $\delta_q(j)=\max\left\{1, \frac{s_q(j)}{j}\right\}$. Since $\varphi(d)\le d$ for every $d\ge 1$, it follows that $s_q(j)$ is at most $\sigma_0(q^j-1)$, the number of positive divisors of $q^j-1$. In particular, Lemma~\ref{lem:estimate-divisor} entails that $s_q(j)\le q^{j/2}$ whenever $q^j\ge 10^4$. By a direct computation, we verify that the same holds in the range $q^j\le 10^4$.
It follows by induction on $j$ that $q^{j/2}\ge j$ if $q\ge 4$ and $j\ge 1$. In particular, $\delta_q(j)\le \frac{q^{j/2}}{j}$ and so 
$$-\sum_{j=1}^{\infty}\frac{\delta_q(j)}{q^j}\ge -\sum_{j=1}^{\infty}\frac{1}{jq^{j/2}}= \log\left(1-\frac{1}{\sqrt{q}}\right)> -\frac{1}{q}-\frac{1}{\sqrt{q}}\;\text{for}\;\; q\ge 4,$$
where the last inequality follows by Eq.~\eqref{eq:log}.
\end{proof}

The previous theorem also implies an estimate on the variance of $\mu_q(n)$. In fact, as shown in the following corollary, this variance goes to zero as $q\to +\infty$.

\begin{corollary}
The limit $\mu_q^{(2)}:=\lim\limits_{x\to \infty}\frac{1}{x}\sum_{1\le n\le x}\mu_q(n)^2$ exists. In particular, for $q\ge 4$, the variance $\sigma(\mu_q):=\mu_q^{(2)}-(\overline{\mu}_q)^2$ of $\mu_q(n)$ satisfies 
$$0\le \sigma(\mu_q)< \frac{2}{\sqrt{q}}-\frac{1}{q}-\frac{2}{q\sqrt{q}}.$$
\end{corollary}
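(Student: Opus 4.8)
The statement to prove is the final Corollary: that the limit $\mu_q^{(2)}$ defining the second moment of $\mu_q(n)$ exists, and that for $q \ge 4$ the variance $\sigma(\mu_q) = \mu_q^{(2)} - (\overline{\mu}_q)^2$ satisfies $0 \le \sigma(\mu_q) < \frac{2}{\sqrt q} - \frac1q - \frac{2}{q\sqrt q}$. I would proceed in three steps: establish existence of $\mu_q^{(2)}$, record the trivial lower bound $\sigma(\mu_q)\ge 0$, and then squeeze the upper bound out of the bounds already proved for $\overline{\mu}_q$ and for $\mu_q^{(2)}$.

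**Step 1 (existence).** Existence of $\mu_q^{(2)}$ is immediate from Corollary~\ref{cor:main} applied with $\alpha = 2$: since $G_q$ is $p$-density-like and $\sum_d \frac{1-G_q(d)}{d}$ converges (Lemma~\ref{lem:aux-normal}), the hypotheses of Theorem~\ref{thm:main} hold, so $G_q^2$ also satisfies them, and hence $\mu_q^{(2)} = A_{\mu_q}^{(2)}$ exists and equals the mean value of $\mu_q(n)^2$. This also gives $\sigma(\mu_q) = A_{\mu_q}^{(2)} - (\overline{\mu}_q)^2 \ge 0$ because by Cauchy--Schwarz (or Jensen applied to $x \mapsto x^2$ on the partial averages) the second moment dominates the square of the first moment; in fact $\sigma(\mu_q)\ge 0$ is just the statement that variance is non-negative, which follows from $\frac1x\sum_{n\le x}(\mu_q(n) - \overline{\mu}_q)^2 \ge 0$ and passing to the limit.

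**Step 2 (upper bound on $\mu_q^{(2)}$).** From the trivial bound $\mu_q(n) \le 1 - \frac1q$ for all $n$ we get $\mu_q(n)^2 \le (1-\frac1q)^2 = 1 - \frac2q + \frac1{q^2}$, hence $\mu_q^{(2)} \le 1 - \frac2q + \frac1{q^2}$. Combining with the already-established lower bound $\overline{\mu}_q > 1 - \frac1q - \frac1{\sqrt q}$ from Theorem~\ref{thm:normal-big} (so that $(\overline{\mu}_q)^2 > (1 - \frac1q - \frac1{\sqrt q})^2$), we obtain
$$\sigma(\mu_q) = \mu_q^{(2)} - (\overline{\mu}_q)^2 < \left(1 - \frac2q + \frac1{q^2}\right) - \left(1 - \frac1q - \frac1{\sqrt q}\right)^2.$$
Expanding the square $(1 - \frac1q - \frac1{\sqrt q})^2 = 1 + \frac1{q^2} + \frac1q - \frac2q - \frac2{\sqrt q} + \frac2{q\sqrt q} = 1 - \frac1q - \frac2{\sqrt q} + \frac2{q\sqrt q} + \frac1{q^2}$ and subtracting, the $1$'s, the $\frac1{q^2}$'s, and one copy of $-\frac1q$ cancel against $-\frac2q$, leaving exactly $\frac{2}{\sqrt q} - \frac1q - \frac{2}{q\sqrt q}$, which is the claimed bound.

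**Main obstacle.** There is no real obstacle here — the corollary is a short bookkeeping consequence of Corollary~\ref{cor:main} (for existence) and Theorem~\ref{thm:normal-big} (for the numerical bounds). The only thing to be careful about is the algebra in the expansion of $(1-\frac1q-\frac1{\sqrt q})^2$ and making sure the inequality is strict in the right direction: strictness comes from the strict inequality $\overline{\mu}_q > 1 - \frac1q - \frac1{\sqrt q}$ in Theorem~\ref{thm:normal-big}, together with the fact that $x\mapsto x^2$ is strictly increasing on the relevant positive range (one should note $1 - \frac1q - \frac1{\sqrt q} > 0$ for $q \ge 4$, which is clear since then $\frac1q + \frac1{\sqrt q} \le \frac14 + \frac12 < 1$). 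I would also remark that $\sigma(\mu_q) \to 0$ as $q \to \infty$ since the upper bound $\frac{2}{\sqrt q} - \frac1q - \frac{2}{q\sqrt q} \to 0$, which is the qualitative point advertised before the corollary.
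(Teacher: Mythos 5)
Your proposal is correct and follows essentially the same route as the paper: existence of $\mu_q^{(2)}$ from Corollary~\ref{cor:main}, the trivial bound $\mu_q^{(2)}\le\left(1-\frac{1}{q}\right)^2$, and the lower bound $\overline{\mu}_q>1-\frac{1}{q}-\frac{1}{\sqrt{q}}$ from Theorem~\ref{thm:normal-big}, followed by the algebraic expansion (which you carry out correctly). Your added checks --- that $1-\frac{1}{q}-\frac{1}{\sqrt{q}}>0$ for $q\ge 4$ so squaring preserves the inequality, and that $\sigma(\mu_q)\ge 0$ via non-negativity of the variance --- are details the paper leaves implicit.
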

\begin{proof}
The existence of $\mu_q^{(2)}$ is an immediate consequence of Corollary~\ref{cor:main}. We have the trivial bound $\mu_q^{(2)}\le \left(1-\frac{1}{q}\right)^2$ and Theorem~\ref{thm:normal-big} provides the bound $\overline{\mu}_q> 1-\frac{1}{q}-\frac{1}{\sqrt{q}}$, from where the result follows.

\end{proof}

\subsubsection{Numerical results on the density of normal elements}
We comment on previous results on lower bounds for the function $\mu_q(n)$, the density of normal elements in $\F_{q^n}$ (over $\F_q$). According to Theorem~3 of~\cite{F00}, there exists $c>0$ such that
$$\mu_q(n)\ge .28477\frac{1}{\sqrt{\log_q n}}, \; \text{for}\; q\ge 2, n\ge q^c.$$
Although this bound holds for every prime power $q$, the function $\frac{1}{\sqrt{\log_q n}}$ goes to zero as $n$ goes to infinity. Taking into account the set of prime divisors of $n$, it is possible to obtain positive lower bounds. In fact, according to Theorem~3.3 of~\cite{GP}, if we fix a set $S$ of distinct prime numbers $p_1, \ldots, p_s$, there exists a constant $C=C(S)>0$ such that
$$\mu_q(n)>C,$$
whenever $S$ is the set of distinct prime factors of $n$. However, such a constant is not given explicitly. Our aim here is to apply Theorem~\ref{thm:normal-big} in order to obtain explicit numerical results, for not all $n$ but considerable proportion of $\mathbb N$ (in the sense of natural density). This is done in the following corollary.

\begin{corollary}\label{cor:numerical}
Let $q\ge 4$ be a prime power and fix $T>0$. Then here exists a constant $C>0$ such that, if $x\ge C$, for all but at most $\frac{x}{1+T\sqrt{q}}$ positive integers $n\le x$, we have that $\mu_q(n)\ge C_{q, T}:=1-\frac{1}{q}-\frac{1}{\sqrt{q}}-T$. 
\end{corollary}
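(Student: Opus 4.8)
The plan is to deduce Corollary~\ref{cor:numerical} from Theorem~\ref{thm:normal-big} together with the mean value statement of Theorem~\ref{thm:normal}, by a Markov-type counting argument. Write $B_q = 1 - \frac{1}{q} - \frac{1}{\sqrt q}$ for the lower bound on $\overline{\mu}_q$ supplied by Theorem~\ref{thm:normal-big}, and set $C_{q,T} = B_q - T$. For $x > 0$, let $E(x)$ be the set of $n \le x$ with $\mu_q(n) < C_{q,T}$; the goal is to show $|E(x)| \le \frac{x}{1 + T\sqrt q}$ once $x$ is large enough. The key inequality comes from splitting the sum defining the mean value: since $0 < \mu_q(n) \le 1 - \frac{1}{q} < 1$ for all $n$, every term is bounded above by $1$, while the terms indexed by $E(x)$ are bounded above by $C_{q,T}$. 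Hence
$$\sum_{n \le x} \mu_q(n) \le |E(x)| \cdot C_{q,T} + \bigl(\lfloor x \rfloor - |E(x)|\bigr) \cdot 1 = \lfloor x \rfloor - |E(x)|\bigl(1 - C_{q,T}\bigr).$$

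First I would record that $1 - C_{q,T} = \frac{1}{q} + \frac{1}{\sqrt q} + T = (1 - B_q) + T$. Next, by Theorem~\ref{thm:normal}, $\frac{1}{x}\sum_{n\le x}\mu_q(n) \to \overline{\mu}_q$, and by Theorem~\ref{thm:normal-big}, $\overline{\mu}_q > B_q$. Choose $\varepsilon > 0$ small enough that $\overline{\mu}_q - \varepsilon > B_q$ — for instance $\varepsilon = \frac12(\overline{\mu}_q - B_q) > 0$. Then there is a constant $C = C(q,T)$ such that for all $x \ge C$ we have $\sum_{n \le x} \mu_q(n) \ge (\overline{\mu}_q - \varepsilon) x \ge B_q \, x$. (One may also absorb the $\lfloor x \rfloor$ versus $x$ discrepancy into the choice of $C$.) Combining this lower bound with the upper bound from the previous paragraph gives
$$B_q\, x \le x - |E(x)|\bigl((1 - B_q) + T\bigr),$$
so that $|E(x)|\bigl((1 - B_q) + T\bigr) \le (1 - B_q) x$, i.e. $|E(x)| \le \frac{(1 - B_q)x}{(1 - B_q) + T}$.

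It remains to compare $\frac{1-B_q}{1-B_q+T}$ with $\frac{1}{1 + T\sqrt q}$, i.e. to check $\frac{1-B_q+T}{1-B_q} \ge 1 + T\sqrt q$, equivalently $\frac{T}{1-B_q} \ge T\sqrt q$, equivalently $1 - B_q \le \frac{1}{\sqrt q}$. Since $1 - B_q = \frac{1}{q} + \frac{1}{\sqrt q}$, this inequality as literally written is false; the intended bound must come from using instead the \emph{true} mean value $\overline{\mu}_q$ in place of its lower estimate $B_q$ at the counting stage, or from a slightly weaker reading of the target fraction. The clean way is: use $\sum_{n\le x}\mu_q(n) \ge \overline{\mu}_q\, x - \varepsilon x$ for $x \ge C$, which yields $|E(x)| \le \frac{(1-\overline{\mu}_q + \varepsilon)x}{1 - \overline{\mu}_q + \varepsilon + T}$, and then note $1 - \overline{\mu}_q + \varepsilon$ can be made $\le \frac{1}{\sqrt q}$ because $1 - \overline{\mu}_q < \frac{1}{q}$ (from $\overline{\mu}_q > 1 - \frac{1}{q}$, which also follows from Theorem~\ref{thm:normal-big}) and $\frac{1}{q} \le \frac{1}{\sqrt q}$ for $q \ge 1$, leaving room to pick $\varepsilon$ with $1 - \overline{\mu}_q + \varepsilon \le \frac{1}{\sqrt q}$; then the comparison $\frac{a}{a+T} \le \frac{1}{1+T\sqrt q}$ holds whenever $a \le \frac{1}{\sqrt q}$, giving $|E(x)| \le \frac{x}{1 + T\sqrt q}$ as desired.

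The main obstacle is the bookkeeping in the last step: one must be careful to run the Markov argument with the genuine mean value $\overline{\mu}_q$ (perturbed by an arbitrarily small $\varepsilon$ absorbed into the threshold $C$), rather than with the cruder constant $B_q$, so that the resulting fraction $\frac{1-\overline{\mu}_q+\varepsilon}{1-\overline{\mu}_q+\varepsilon+T}$ is small enough to be dominated by $\frac{1}{1+T\sqrt q}$; the input $\overline{\mu}_q > 1 - \frac1q$ from Theorem~\ref{thm:normal-big} (equivalently $1 - \overline{\mu}_q < \frac1q \le \frac1{\sqrt q}$) is exactly what makes this comparison go through, and the function $\mu \mapsto \frac{1-\mu}{1-\mu+T}$ being increasing in $1-\mu$ is the monotonicity fact that ties it together.
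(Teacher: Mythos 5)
Your overall strategy (a Markov-type counting argument driven by Theorem~\ref{thm:normal-big} and the mean value statement of Theorem~\ref{thm:normal}) is the same as the paper's, but as written your proof has a genuine gap — one you correctly sensed midway. Bounding the terms with $n\notin E(x)$ by $1$ yields only $|E(x)|\le \frac{(1-B_q)x}{(1-B_q)+T}$ with $1-B_q=\frac1q+\frac1{\sqrt q}>\frac1{\sqrt q}$, which is strictly weaker than the claimed $\frac{x}{1+T\sqrt q}$ (since $t\mapsto \frac{t}{t+T}$ is increasing). The repair you then propose does not work: it rests on the claim that $\overline{\mu}_q>1-\frac1q$, hence $1-\overline{\mu}_q<\frac1q\le\frac1{\sqrt q}$. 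But Theorem~\ref{thm:normal-big} asserts the \emph{opposite} inequality $\overline{\mu}_q\le 1-\frac1q$ (this is forced, because $G_q(1)=1-\frac1q$ divides every $\mu_q(n)$ as a factor, so $\mu_q(n)\le 1-\frac1q$ pointwise). Thus $1-\overline{\mu}_q\ge\frac1q$, and all the theorem guarantees from below is $1-\overline{\mu}_q<\frac1q+\frac1{\sqrt q}$, which is not $\le\frac1{\sqrt q}$. Replacing the crude constant $B_q$ by the true mean value therefore buys you nothing; the fraction $\frac{1-\overline{\mu}_q+\varepsilon}{1-\overline{\mu}_q+\varepsilon+T}$ need not be dominated by $\frac{1}{1+T\sqrt q}$.

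The missing idea is to use the sharper pointwise bound $\mu_q(n)\le 1-\frac1q$ — which you even quote at the outset — on the terms with $n\notin E(x)$, instead of the bound $1$. Writing $\delta=|E(x)|/x$, for $x\ge C$ one gets
$$1-\tfrac1q-\tfrac1{\sqrt q}\;\le\;\tfrac1x\sum_{n\le x}\mu_q(n)\;\le\;\delta\, C_{q,T}+(1-\delta)\left(1-\tfrac1q\right)=\left(1-\tfrac1q\right)-\delta\left(\tfrac{1}{\sqrt q}+T\right),$$
whence $\delta\left(\tfrac{1}{\sqrt q}+T\right)\le\tfrac{1}{\sqrt q}$ and $\delta\le\frac{1}{1+T\sqrt q}$. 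The point is that the $\frac1q$ cancels between the lower bound on the mean and the upper bound $1-\frac1q$ on the good terms, leaving only the $\frac1{\sqrt q}$ deficit; using the bound $1$ throws away exactly this cancellation. This is precisely the paper's argument, so your proof becomes correct (and identical to the paper's) once this one substitution is made.
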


\begin{proof}
We observe that, from Theorem~\ref{thm:normal-big}, there exists $C>0$ such that $S_q(x)=\frac{1}{x}\sum_{n\le x} \mu(n)\ge 1-\frac{1}{q}-\frac{1}{\sqrt{q}}$ whenever $x\ge C$. Fix $x\ge C$ and suppose that $\delta x$ positive integers $n\le x$ are such that $\mu_q(n)<C_{q, T}$. Since the bound $\mu_q(n)\le 1-\frac{1}{q}$ trivially holds for every $n\ge 1$, we conclude that
$$S_q(x)\le \delta C_{q, T}+(1-\delta)(1-1/q),$$
hence $\delta\left(\frac{1}{\sqrt{q}}+T\right)\le \frac{1}{\sqrt{q}}$ and the result follows.
\end{proof}

The previous corollary yields some explicit numerical results: for instance, at least half of the positive integers $n$ are such that $\mu_q(n)\ge 1-\frac{1}{q}-\frac{2}{\sqrt{q}}$, which is positive for $q\ge 7$. Moreover, such bound goes fast to $1$: we have that $$1-\frac{1}{q}-\frac{2}{\sqrt{q}}\ge .95,$$ 
for $q\ge 1,640$.

\section*{Acknowledgments}
We thank the anonymous reviewers for many helpful comments and suggestions. The author was supported by FAPESP under grant 2018/03038-2, Brazil.




\end{document}